\journal{arXiv}
\DeclareSymbolFont{largesymbolsA}{U}{txexa}{m}{n}
\DeclareMathSymbol{\bigtimes}{\mathop}{largesymbolsA}{16}
\newtheorem{theorem}{Theorem}
\newtheorem{lemma}{Lemma}
\numberwithin{equation}{section}
\newtheorem{remark}{Remark}
\newcommand{\T}{\mathcal{T}}
\newcommand{\N}{\mathcal{N}}
\newcommand{\R}{\mathcal{R}}
\newcommand{\I}{\mathcal{I}}
\newcommand{\mo}{\mathcal{O}}
\begin{document}

\begin{frontmatter}


\title{Higher order stable generalized finite element method for the elliptic eigenvalue problem with an interface in 1D}


\author[ad1,ad2]{Quanling Deng\corref{corr}}
\cortext[corr]{Corresponding author}
\ead{Quanling.Deng@curtin.edu.au}

\author[ad1,ad2]{Victor Calo}
\ead{Victor.Calo@curtin.edu.au}
%

%
%
%
%
%
\address[ad1]{Department of Applied Geology, Curtin University, Kent Street, Bentley, Perth, WA 6102, Australia}
\address[ad2]{Mineral Resources, Commonwealth Scientific and Industrial Research Organisation (CSIRO), Kensington, Perth, WA 6152, Australia}
%
%

\begin{abstract}
We study the generalized finite element methods (GFEMs) for the second-order elliptic eigenvalue problem with an interface in 1D. The linear stable generalized finite element methods (SGFEM) were recently developed for the elliptic source problem with interfaces. We first generalize SGFEM to arbitrary order elements and establish the optimal error convergence of the approximate solutions for the elliptic source problem with an interface. We then apply the abstract theory of spectral approximation of compact operators to establish the error estimation for the eigenvalue problem with an interface. The error estimations on eigenpairs strongly depend on the estimation of the discrete solution operator for the source problem.  We verify our theoretical findings in various numerical examples including both source and eigenvalue problems.

\noindent

\end{abstract}

\begin{keyword}
interface problem \sep eigenvalue problem \sep FEM \sep SGFEM

\end{keyword}

\end{frontmatter}

\vspace{0.5cm}

\section{Introduction}

The finite element method (FEM) is a widely-used and well-understood numerical method for solving partial differential equations arising from science and engineering problems \cite{ciarlet2002finite,brenner2008mathematical,ern2004theory}. The generalized FEM (GFEM) is an extension of the FEM and it is developed to attack problems with non-smooth (low-regularity) solutions, that is, problems involving material discontinuity, moving interfaces, crack propagation, etc. The idea of GFEM is to augment the standard finite element space with a space of non-polynomial functions, called the enrichment space; c.f., \cite{strouboulis2001generalized,strouboulis2000design,strouboulis2000generalized}. The functions in the enrichment space mimic the local behavior of the unknown solution, in particular, the solution behavior near the cracks or the interfaces. In the literature, the method is also known as the extended FEM (XFEM) and both methods are particular instances of the partition of unity method (PUM), which allows the use of any partition of unity combined with local enrichment functions; c.f., \cite{melenk1996partition,melenk1995generalized,babuska1995partition}. We refer further to the review articles \cite{fries2010extended,abdelaziz2008survey,belytschko2009review} and to the references therein for the developments and various applications of GFEMs/XFEMs.

However, the GFEMs suffer from bad conditioning and its conditioning is not robust with respect to the mesh configurations \cite{babuvska2012stable,kergrene2016stable}. In general, the conditioning of GFEM could be much worse than that of the standard FEM, e.g., the condition number of the linear system of GFEM could be of $\mathcal{O}(h^{-4})$ (c.f., \cite{babuvska2012stable}).
The stable GFEM (SGFEM) has been recently introduced to overcome these issues
\cite{babuvska2012stable,kergrene2016stable,babuvska2017strongly}. A GFEM is called stable (SGFEM) if (1) it maintains the optimal order of error convergence, that is, the approximated solution converges to the exact solution at a rate $h^p$ in $H^1$ norm where $p$ is the degree of the underlying FEM polynomial basis functions and $h$ is the discretization parameter (2) the conditioning of GFEM is not worse than that of the standard FEM, that is, the (scaled) condition number of the linear system arising from the GFEM is of the same order $\mathcal{O}(h^{-2})$ as that of a standard FEM in a robust manner with respect to the mesh configurations (c.f., \cite{babuvska2012stable,kergrene2016stable}). 

In literature, most of the work concerning PUM, XFEM, and GFEM focus on linear elements as the optimal convergence requires the lowest regularities on the unknown solutions. We mention the high order XFEM developed in \cite{stazi2003extended,laborde2005high} and high order SGFEM developed in \cite{zhang2014higher}. In particular, the recent work in \cite{zhang2014higher} developed the high order SGFEM and the authors showed that it yields high order convergence both theoretically and analytically. Instead of using the standard Lagrange shape functions of degree $p$ (as for high order FEM), the authors constructed the SGFEM enrichment space based on the piecewise-linear hat-functions (for the linear FEM). It is in a hierarchy manner. The authors used polynomial enrichments and focused on approximating smooth solutions. Optimal error estimates in energy norm were established.

To our best knowledge, most of the work on XFEM/GFEM is for source problems or interface source problems and there is no work on these methods for solving differential eigenvalue problems. Traditionally, differential eigenvalue problems are solved by using standard FEMs (c.f.,\cite{strang1973analysis,bramble1973rate,osborn1975spectral, chatelin1983spectral, babuvska1991eigenvalue, canuto1978eigenvalue,mercier1978eigenvalue,mercier1981eigenvalue}), isogeometric analysis (c.f.,\cite{cottrell2006isogeometric,hughes2008duality,hughes2014finite}), discontinuous Galerkin methods (c.f.,\cite{antonietti2006discontinuous,giani2015hp,gopalakrishnan2015spectral}), etc. We are not going to expand the literature review here and only mention the most-recent quadrature rule blending techniques developed in \cite{ainsworth2010optimally} for FEMs and in \cite{calo2017dispersion, puzyrev2018spectral, deng2017dispersion, deng2018dispersion,calo2017quadrature} for isogeometric analysis. The optimally-blended rules developed in these papers lead to two extra orders of superconvergence for the eigenvalues while maintaining the optimal convergence rates for the eigenfunctions. A generalized Pythagorean eigenvalue error theorem is developed in \cite{puzyrev2017dispersion, bartovn2018generalization} for error analysis. The optimally-blended rules are applied to solve eigenvalue problems for $2n$ order operator in \cite{deng2019optimal} and for Schr$\ddot{\text{o}}$dinger operator in \cite{deng2018isogeometric}.

Another interesting work is the hybrid high-order spectral approximation in \cite{calo2017spectral}, which results in two extra orders of superconvergence for the eigenvalues and one extra order of superconvergence for the eigenfunctions.  However, all these methods were dealing with elliptic differential operators where the diffusion coefficients are continuous. When the elliptic operator contains discontinuous diffusion coefficients (for example, when there is a material discontinuity), all these methods lose the optimal convergence rates on their approximations of both the eigenvalues and the eigenfunctions due to the low regularity of the eigenfunctions. We show will this in the numerical experiments for the standard FEMs.

In this paper, we take a forward step and initiate the study of the recently-developed SGFEM for solving elliptic eigenvalue problem with interfaces. The eigenvalue problem with interfaces could arise from structural vibrations in mechanic engineering where the material discontinuity occurs. The material discontinuity results in a discontinuous diffusion coefficient in the elliptic operator which leads to an interface problem, which brings new challenges to develop robust and stable numerical methods for solving the elliptic eigenvalue problem with interfaces.

We start with developing high order SGFEMs for the 1D interface source problems. The high order generalization is a simple extension of the linear (SGFEM) elements developed in \cite{babuvska2012stable,kergrene2016stable} and we use the same enrichment to capture the local behavior near the interface. We mention that these high order methods are different from the high order SGFEMs developed in \cite{zhang2014higher} where the focus was on approximating smooth solutions. We establish the optimal a priori error estimates where the key is to show the existence of an interpolant in the augmented space which leads to optimal convergence rates. Using the estimates of the discrete solution operators, we then apply the abstract theory of spectral approximation for compact operators to establish the optimal error estimates of SGFEM approximation of the eigenvalue problem with an interface.

The rest of this paper is organized as follows. Section \ref{sec:ps} presents the 1D second-order elliptic eigenvalue problem with an interface. Therein, we derive analytical solutions for several special cases. In order to perform the error analysis, we also introduce the weak solution for the corresponding interface source problem as well as the solution operator. In Section \ref{sec:gfem}, we describe the high order SGFEMs and define the discrete solution operator. In Section \ref{sec:ea}, we first recall the main abstract results of the spectral approximation of compact operators in Hilbert spaces. We then establish the optimal error estimates of arbitrary order SGFEMs for the interface source problem followed by the error analysis of SGFEMs for the eigenvalue problem with an interface. In Section \ref{sec:num}, we present various numerical examples for both the interface source problem and the eigenvalue problem with an interface. Finally, Section \ref{sec:con} presents some concluding remarks and discusses the extensions to multiple dimensions as well as to finite elements with basis functions of high continuities.

\section{Problem statement} \label{sec:ps}
We consider 1D and let $\Omega = (0, 1), \Omega_0 = (0, \gamma), \Omega_1 = (\gamma, 1)$, where $0 < \gamma < 1$. We define $\Gamma := \overline{\Omega}_0 \cap \overline{\Omega}_1$ and call it an interface. The domain $\Omega$ is separated by the interface $\Gamma$ into two sub-domains $\Omega_0$ and $\Omega_1$, where each sub-domain contains one material. We consider the second-order elliptic eigenvalue problem on this type of domain, which we state as: Find the nontrivial eigenpairs $(\lambda, u)$ satisfying

\begin{equation} \label{eq:pde1d}
\begin{aligned}
- ( \kappa u' )' & = \lambda u, \qquad \text{in} \qquad \Omega, \\
u & = 0, \qquad \ \text{on} \qquad \partial\Omega, \\
\end{aligned}
\end{equation}
where the apostrophe $'$ refers to the derivative of the function and  $0 < \tilde \kappa_0 \le \kappa(x) \le \tilde \kappa_1 < + \infty$ is the elliptic coefficient modeling the material properties. The coefficient function $\kappa(x)$ is discontinuous across the interface $\Gamma$ due to the change in material properties. We assume that $\kappa_j (x) := \kappa \big|_{\Omega_j} $ is a continuous function on $\overline{\Omega}_j$ for $j=0,1$.

\subsection{Analytical solutions on special cases}
Analytical eigenpairs of \eqref{eq:pde1d} can be found on the special case where $\kappa_j (x), j =0, 1,$ are constants. Without loss of generality, we assume that $\kappa_0(x) = 1, \kappa_1(x) = \eta.$
We define the piecewise function
\begin{equation}
u(x) = 
\begin{cases}
u_0(x), \qquad 0 \le x \le \gamma, \\
u_1(x), \qquad \gamma \le x \le 1,
\end{cases}
\end{equation}
where both $u_0$ and $u_1$ are defined at $\gamma$ to be the same value.
Due to the discontinuity of $\kappa(x)$ on the interface, the differential equation in \eqref{eq:pde1d} is understood as 
\begin{equation} \label{eq:pde01}
\begin{aligned}
- u''_0 & = \lambda u_0, \qquad 0 < x < \gamma, \\
- \eta u''_1 & = \lambda u_1, \qquad \gamma < x <1, \\
\end{aligned}
\end{equation}
which, by using analytical solution technique of ordinary differential equation, admits a solution of the form
\begin{equation}
u(x) = 
\begin{cases}
u_0 = c_0 \sin(\omega_0 x) + c_1 \cos(\omega_0 x), \qquad 0 \le x \le \gamma, \\
u_1 = d_0 \sin(\omega_1 x) + d_1 \cos(\omega_1 x), \qquad \gamma \le x \le 1,
\end{cases}
\end{equation}
where $c_j, d_j, j =0, 1,$ are constants and 
\begin{equation} \label{eq:w1w2}
\lambda = \omega_0^2 = \eta \omega_1^2.
\end{equation}

Applying the boundary condition $u(0) = 0$ and rewriting $u_1$, we get
\begin{equation}
u(x) = 
\begin{cases}
u_0 = c_0 \sin(\omega_0 x), \qquad 0 \le x \le \gamma, \\
u_1 = \sqrt{d_0^2 + d_1^2} \sin(\omega_1 x + \theta), \quad \cos(\theta) = \frac{d_0}{\sqrt{d_0^2 + d_1^2}}, \qquad \gamma \le x \le 1,
\end{cases}
\end{equation}
which, by introducing new constants $c,d$ and applying the boundary condition $u(1) = 0$, is further rewritten as 
\begin{equation} \label{eq:u1dsol}
u(x) = 
\begin{cases}
u_0 = c \sin(\omega_0 x), \qquad 0 \le x \le \gamma, \\
u_1 = d \sin(\omega_1 x - \omega_1), \qquad \gamma \le x \le 1.
\end{cases}
\end{equation}

We are interested in seeking the solutions such that 
\begin{equation} \label{eq:condgm}
u_0(\gamma) = u_1(\gamma), \qquad -u'_0(\gamma) = -\eta u'_1(\gamma),
\end{equation}
where these two conditions mean that the solution and its flux are continuous at $\gamma$. However, there are four unknowns in \eqref{eq:u1dsol} and we have three equations in \eqref{eq:condgm} and \eqref{eq:w1w2}. The last condition comes from a normalization of $u$ on $\Omega$. To obtain a general solution, without loss of generality, one can assume $c=1$.

Denoting $\rho = \sqrt{\eta}$, from \eqref{eq:w1w2}, we have $\omega_0 = \rho \omega_1$. Applying the conditions \eqref{eq:condgm}, we obtain

\begin{equation} \label{eq:condgm1}
\begin{aligned}
\sin(\rho \omega_1 \gamma) & = d \sin(\omega_1 \gamma - \omega_1), \\
\cos( \rho \omega_1 \gamma) & = d \rho \omega_1 \cos(\omega_1 \gamma - \omega_1),
\end{aligned}
\end{equation}
where $\rho, \gamma$ are data and $\omega_1, d$ are unknown. For any $\eta$ and $\gamma$, the 1D eigenvalue problem \eqref{eq:pde1d} has eigenvalues \eqref{eq:w1w2} and eigenfunctions \eqref{eq:u1dsol} once $\omega_1$ and $d$ are solved from \eqref{eq:condgm1}. We present the analytic solutions for the following special cases.

\subsubsection{Special case 1: $\eta =1$}
Given $\eta=1$, there is no discontinuity in $\kappa$ and \eqref{eq:pde1d} is the classic 1D Laplacian eigenvalue problem.  For arbitrary $0<\gamma<1$, the exact eigenpair is $(\lambda = (n\pi)^2, u=\sin(n\pi x)), n=1,2,\cdots$.

\subsubsection{Special case 2: $\eta =4, \gamma = 1/3$} \label{sec:c2}
Solving \eqref{eq:condgm1}  yields the solutions
\begin{equation} 
\begin{aligned}
d & = -1,  \quad & \omega_1  &= 3n\pi - \frac{3 \pi}{4}, n=1,2,\cdots, \\
d & = -1,  \quad & \omega_1  &= 3n\pi + \frac{3 \pi}{4}, n=0,1,2,\cdots,\\
d & = \frac{1}{2},  \quad & \omega_1  &= 3n\pi, n=1,2,\cdots, \\
d & = \frac{1}{2},  \quad & \omega_1  &= 3n\pi + \frac{3 \pi}{2}, n=0,1,2,\cdots.\\
\end{aligned}
\end{equation}
The exact eigenvalues are then listed in an increasing order as
\begin{equation}
\begin{aligned}
\lambda = & \frac{9 }{4} n^2 \pi^2, \qquad n=1,2, \cdots.
\end{aligned}
\end{equation}

The eigenfunctions are given in \eqref{eq:u1dsol} with $c=1$ and they can be further normalized. Moreover, $u_n$ are $C^1$ for $n = 1,3,5, \cdots$ while they are $C^0$ for $n=2,4, \cdots.$ We obtain similar results for the cases with other rational constants $\eta$ and $\gamma$.

\subsubsection{Special case 3: transcendental numbers} \label{sec:c3}
For cases involving transcendental numbers, such as $\pi, e$, \textbf{Mathematica} is unable to give all exact or numerical solutions. However, we can approximate them numerically by fixing an interval on eigenvalues. For example, set $\gamma = 1/\pi, \eta = e^2,$ then the first six solutions (with 18 decimal digits) are
\begin{equation} \label{eq:refsol4}
\begin{aligned}
d & = -0.964943954172912233,  \quad & \omega_1 = 2.14610955883566090, \\
d & = 0.412394727933596716,  \quad & \omega_1 = 3.86404207192115656, \\
d & = -0.742060865115781954,  \quad & \omega_1 = 6.37668915886029808, \\
d & = 0.565172936488676170,  \quad & \omega_1 = 7.81591521600899775, \\
d & = -0.508613379389131093,  \quad & \omega_1 = 10.4448447099813517, \\
d & = 0.817433181806672682,  \quad & \omega_1 = 11.9357905821583099. \\
\end{aligned}
\end{equation}

In the numerical experiment section, we use these numerical solutions as a reference solutions to the exact ones.

\subsection{Weak solution}
Let $H^1(\Omega)$ be the standard Hilbert space of functions and $H^1_0(\Omega)$ be the Hilbert space with functions vanishing at the boundary. 
In weak formulation, the problem \eqref{eq:pde1d} reads as follows: Find $(\lambda, u)\in \mathbb{R}_{>0} \times H^1_0(\Omega)$ such that 
\begin{equation} \label{eq:vf}
a(u, w) = \lambda b(u, w), \qquad \forall \ w \in H^1_0(\Omega),
\end{equation}
with the bilinear forms $a$ and $b$ defined on $H^1_0(\Omega)\times H^1_0(\Omega)$ and $L^2(\Omega)\times L^2(\Omega)$ as
\begin{equation}
a(v, w) = (\kappa v', w')_{L^2(\Omega)}, \qquad  b(v, w) = (v, w)_{L^2(\Omega)},
\end{equation}
where $(\cdot, \cdot)_{L^2(\Omega)}$ denotes the inner product in $L^2(\Omega)$.

\subsection{Source problem and solution operator} \label{sec:T}
The source problem associated with the eigenvalue problem~\eqref{eq:pde1d} is: For $f \in L^2(\Omega)$, find $u$ such that 
\begin{equation} \label{eq:pdef}
\begin{aligned}
- (\kappa u')' & = f, \qquad \text{in} \qquad \Omega, \\
u & = 0, \qquad \text{on} \qquad \partial\Omega, \\
\end{aligned}
\end{equation}
while the corresponding weak formulation is as follows: For $f \in L^2(\Omega)$, find $u\in H^1_0(\Omega)$ such that 
\begin{equation} \label{eq:weakf}
a(u,w) = b(f, w), \qquad \forall \ w \in H^1_0(\Omega).
\end{equation}

The solution operator associated with \eqref{eq:weakf} is 
$T: L^2(\Omega) \rightarrow L^2(\Omega)$, so that we have $T(f)\in H^1_0(\Omega) \subset L^2(\Omega)$ and
\begin{equation} \label{eq:weakTf}
a(T(f),w) = b(f, w), \qquad \forall \ w \in H^1_0(\Omega).
\end{equation}
 
By the Rellich--Kondrachov Theorem (see, e.g., \cite[Thm.~1.4.3.2]{grisvard1985elliptic}), 
$T$ is compact from $L^2(\Omega)$ to $L^2(\Omega)$. The reason for introducing the solution operator $T$ is that  $(\lambda, u)\in\mathbb{R}_{>0}\times H^1_0(\Omega)$ is an eigenpair for \eqref{eq:vf} if and only if $(\mu,u)\in\mathbb{R}_{>0}\times H^1_0(\Omega)$ with $\mu = \lambda^{-1}$ is an eigenpair of $T$. 
We also define the adjoint solution operator $T^*: L^2(\Omega) \rightarrow L^2(\Omega)$ such that, for all $g \in L^2(\Omega)$, $T^*(g)\in H^1_0(\Omega)$ and 
\begin{equation} \label{eq:weakT*}
a(w,T^*(g)) = b(w,g), \qquad \forall \ w \in H^1_0(\Omega).
\end{equation}
The symmetry of the bilinear forms $a$ and $b$ implies that $T=T^*$; however, allowing more generality, we keep a distinct notation for the two operators. Since, in general, we have 
\begin{equation} \label{eq:tts}
(T(f),g)_{L^2(\Omega)} = a(T(f),T^*(g)) = (f,T^*(g))_{L^2(\Omega)},
\end{equation}
we infer that $T^*$ is the adjoint operator of $T$, once the duality product is identified with the inner product in $L^2(\Omega)$. Therefore, in the present symmetric context, the operator $T$ is self-adjoint.

\section{Stable generalized finite element methods (SGFEMs)} \label{sec:gfem}

The main idea of GFEM is to enlarge the standard FEM space with an enrichment space consisting of certain enrichment functions. The GFEM framework for solving \eqref{eq:pde1d} is: Find $(\lambda^h, u^h)\in \mathbb{R} \times V^h$ such that 
\begin{equation} \label{eq:vfh}
a(u^h, w^h) = \lambda^h b(u^h, w^h), \qquad \forall \ w^h \in V^h,
\end{equation}
where the approximation space $V^h \subset H_0^1(\Omega)$ and is given by
\begin{equation}
V^h = V^h_{FEM} \oplus V^h_{ENR} = \{ w=w_1 + w_2: w_1 \in V^h_{FEM}, w_2 \in V^h_{ENR} \},
\end{equation}
with $V^h_{FEM}$ being the standard FEM space and $V^h_{ENR}$ being the enrichment space. We specify these spaces for 1D problems with an interface as follows.

Let $\T_h$ be a partition of the unit interval $\overline{\Omega} = [0,1]$ with nodes $0 = x_0 < x_1 < \cdots < x_N = 1.$ We define the sets $\N^h := \{0, 1, 2,\cdots, N \}$ and $\N^h_e = \{ 1, 2,\cdots, N \}$. Let $\tau_j = [x_{j-1}, x_j]$ be an element with size $h_j = x_j - x_{j-1}$ for $j\in \N^h_e$.  There are $N$ elements. Let $h = \max_{j\in \N^h_e} h_j$. Since $0 < \gamma <1$, there exists $r \in \N^h_e$ such that $\gamma \in \tau_r$. There is at least one such $r$; there are two such $r$ when $\gamma$ coincides with a node in the partition $\T_h$. In the later case, the partition is called a fitting mesh, in which the enrichment is not necessary (c.f., \cite{kergrene2016stable}). In this paper, we consider non-fitting meshes, that is, we assume $\gamma$ is not located at a node. In this case, we denote the element $\tau_{r-1/2} = [x_{r-1}, \gamma]$ and $\tau_{r+1/2} = [\gamma, x_r]$.

Let $p$ be the degree of a polynomial and define a set $\N^h_0 = \{1, 2,\cdots, pN - 1 \}$. 
Let $N_j^p, j \in \N^h_0,$ be the $p$-th order standard $C^0$ B-spline basis functions (which is equivalent to the Lagrange basis functions). In this setting, we specify the FEM space and the enrichment space as
\begin{equation}
V^h_{FEM} = \text{span} \{N_j^p: j  \in \N^h_0\}, \qquad V^h_{ENR} = \text{span} \{w N_j^p: j  \in \R^h \subset \N^h_0\},
\end{equation}
where $w$ is the enrichment and $V^h_{ENR}$ is called the enrichment space of GFEM. The function $w$ is chosen such that it mimics the exact solution near the interface. The set $\{ x_j \}_{j\in \R^h}$ denotes the set of enriched nodes. For $w=0$, we remove the enrichment space and the method reduces to the standard FEM.

The GFEM eigenfunction $u^h \in V^h$ is obtained in the form
\begin{equation} \label{eq:uh}
u^h = \sum_{j\in \N^h_0} U^F_j N_j^p + \sum_{j\in \R^h} U^E_j w N_j^p 
\end{equation}
by solving the generalized matrix eigenvalue problem
\begin{equation} \label{eq:mevp}
\begin{bmatrix}
\mathbf{K}_{FF} & \mathbf{K}_{FE} \\
\mathbf{K}_{EF} & \mathbf{K}_{EE} \\
\end{bmatrix}
\begin{bmatrix}
\mathbf{U}^F \\
\mathbf{U}^E \\
\end{bmatrix}
= \lambda^h
\begin{bmatrix}
\mathbf{M}_{FF} & \mathbf{M}_{FE} \\
\mathbf{M}_{EF} & \mathbf{M}_{EE} \\
\end{bmatrix}
\begin{bmatrix}
\mathbf{U}^F \\
\mathbf{U}^E \\
\end{bmatrix},
\end{equation}
where the block matrices are defined (using the bilinear forms) with entries
\begin{equation}
\begin{aligned}
(\mathbf{K}_{FF})_{jk} & = a(N_k^p, N_j^p), \quad & (\mathbf{M}_{FF})_{jk} & = b(N_k^p, N_j^p), \\
(\mathbf{K}_{EE})_{jk} & = a(wN_k^p, wN_j^p), \quad & (\mathbf{M}_{EE})_{jk} & = b(wN_k^p, wN_j^p), \\
(\mathbf{K}_{FE})_{jk} & =  (\mathbf{K}_{EF})_{kj} = a(wN_k^p, N_j^p), \quad & (\mathbf{M}_{FE})_{jk} & =  (\mathbf{M}_{EF})_{kj} = b(wN_k^p, N_j^p). \\
\end{aligned}
\end{equation}
Herein, the matrix eigenvalue problem \eqref{eq:mevp} arises from \eqref{eq:vfh}. $\lambda^h$ is GFEM approximated eigenvalue. $F$ corresponds to the standard  FEM part while $E$ corresponds to the enrichment part. $\mathbf{U}^S$ is an unknown vector consists of $U_j^S$ for $S = F, E$ as in the solution representation \eqref{eq:uh}. Since both the stiffness and mass matrices are positive-definite, the generalized matrix eigenvalue problem \eqref{eq:mevp} has a unique set of eigenpairs.

Different enrichment choices for $w$ and set $\R^h$ lead to different GFEMs, such as the Geometric GFEM, Topological GFEM, M-GFEM, and SGFEM; see \cite{kergrene2016stable,babuvska2017strongly} for details.
 In general, the conditioning of GFEM is significantly worse than that of FEM, which results in extreme difficulties when solving the corresponding linear system. The conditioning numbers of standard FEM, Geometric GFEM,  SGFEM, is $\mo(h^{-2})$ , $\mo(h^{-4})$, $\mo(h^{-2})$, respectively. The conditioning of M-GFEM is not robust in higher dimensions (2D or 3D). The energy error estimates for the standard FEM, Topological GFEM, Geometric GFEM,  M-GFEM, and SGFEM, are $\mo(h^{1/2}), \mo(h^{1/2}), \mo(h), \mo(h), \mo(h)$, respectively. Among them, the SGFEM is accurate as well as robustly well-conditioned. The SGFEM with linear elements is well-studied in \cite{kergrene2016stable,babuvska2017strongly}, thus, we focus on its generalization to arbitrary orders.


\subsection{Stable GFEM (SGFEM)} \label{sec:sgfem}
Let $w^* = |x - \gamma|$. The enrichment is defined as a continuous and locally-supported function, that is $w= \I_h w^* - w^*$ (in \cite{kergrene2016stable}, it is defined as $w= w^* - \I_h w^*$, we define in this way so that it is positive over the element $\tau_r$ but they are essentially the same), where $\I_h w^*$ is the piecewise linear interpolant of $w^*$ with respect to the mesh $\T_h.$ The support of the enrichment is on the element $\tau_r = [x_{r-1}, x_{r}]$. For $p$-th order basis function, there are $p+1$ enrichment  functions, thus the dimension of $\mathbf{M}_{EE}$ or $\mathbf{K}_{EE}$ is $p+1$.

\subsection{Discrete solution operator} \label{sec:Th}
The corresponding GFEM discretization of the source problem \eqref{eq:pdef} reads as follows: Find $u^h \in V^h$ such that 
\begin{equation} \label{eq:vfhf}
a(u^h, w^h) = b(f, w^h), \qquad \forall \ w^h \in V^h.
\end{equation}
We define the GFEM solution operator $T_h: L^2(\Omega) \to V^h$ so that
\begin{equation} \label{eq:Th}
a(T_h(f), w^h) =  b(f, w^h), \qquad \forall \ w^h \in V^h.
\end{equation}

As for the continuous solution operator, we also define the discrete adjoint solution operator $T^*_h: L^2(\Omega) \rightarrow V^h$ such that, for all $g \in L^2(\Omega)$, 
\begin{equation} \label{eq:T*h}
a(w^h,T^*_h(g)) = b(w^h, g), \qquad \forall \ w^h \in V^h.
\end{equation}
Similarly, the symmetry of the bilinear forms $a$ and $b$ implies that $T_h =T^*_h.$

\section{Error analysis} \label{sec:ea}

In this section, we first establish error estimates for the SGFEM approximations for the source problem and then for the eigenvalue problem. In what follows, we use the symbol $C$ to denote a generic constant (its value can change at each occurrence) that can depend on the mesh regularity, the polynomial degree $p$ and the domain $\Omega$, but is independent of the mesh size $h$.

\subsection{Spectral approximation theory for compact operators} 
\label{sec:theory}

Let us now briefly recall the main results we use concerning the spectral approximation of compact operators in Hilbert spaces. Let $L$ be a Hilbert space with inner product denoted by $(\cdot,\cdot)_L$, and let $T \in \mathcal{L}(L;L)$; assume that $T$ is compact. We do not assume for the abstract theory that $T$ is self-adjoint and we let $T^*\in\mathcal{L}(L;L)$ denote the adjoint operator of $T$. Let $T_n\in \mathcal{L}(L;L)$ be a member of a sequence of compact operators that converges to $T$ in operator norm, i.e., 
\begin{equation} \label{eq:normc}
\lim_{n \to +\infty} \|  T -  T_n \|_{\mathcal{L}(L; L)} = 0,
\end{equation}
and let $T_n^*\in \mathcal{L}(L;L)$ be the adjoint operator of $T_n$.
We want to study how well the eigenvalues and the eigenfunctions of $T_n$ approximate those of $T$. 
Let $\sigma(T)$ denote the spectrum of the operator $T$ and let $\mu \in \sigma(T)\setminus\{0\}$ be a nonzero eigenvalue of $T$. Let $\varsigma$ be the ascent of $\mu$, i.e., the smallest integer $\varsigma$ such that 
$\text{ker}(\mu I -  T)^\varsigma = \text{ker}(\mu I -  T)^{\varsigma+1}$,
where $I$ is the identity operator. Let also 
\begin{equation} \label{eq:def_Gmu}
G_\mu = \text{ker}(\mu I-T)^\varsigma, \qquad G^*_\mu = \text{ker}(\mu I-T^*)^\varsigma,
\end{equation} 
and $\varrho= \text{dim}(G_\mu)$ (this integer is called the algebraic multiplicity of $\mu$; note that $ \varrho \ge \varsigma$). 

\begin{theorem}[Convergence of the eigenvalues] \label{thm:eve0}
Let $\mu\in \sigma(T)\setminus\{0\}$. 
Let $\varsigma$ be the ascent of $\mu$ and let $\varrho$ be its algebraic multiplicity.
Then there are $\varrho$ eigenvalues of $T_n$, denoted as $\mu_{n,1}, \cdots, \mu_{n,\varrho}$, that converge to $\mu$ as $n \to +\infty$. Moreover, letting $\langle \mu_n \rangle = \frac{1}{\varrho} \sum_{j=1}^\varrho\mu_{n,j}$ denote their arithmetic mean, 
there is $C$, depending on $\mu$ but independent of $n$, such that 
\begin{equation} \label{eq:cv_eigenval}
\begin{aligned}
\max_{1 \le j \le\varrho } |\mu - \mu_{n,j}|^\varsigma & + |\mu - \langle \mu_n \rangle | \le C \Big( \sup_{\substack{0\ne \phi \in G_\mu\\ 0\ne \psi \in G_\mu^*}} \frac{|((T-T_n)\phi, \psi)_L|}{\| \phi \|_L  \| \psi \|_L } \\
& + \|(T-T_n)|_{G_\mu} \|_{\mathcal{L}(G_\mu;L)}  \|(T-T_n)^*|_{G_\mu^*} \|_{\mathcal{L}(G_\mu^*;L)} \Big).
\end{aligned}
\end{equation}
\end{theorem}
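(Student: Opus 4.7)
The plan is to invoke the classical Dunford--Taylor functional calculus: encode the isolated eigenvalue $\mu$ via a spectral projector defined by a contour integral, and track its perturbation under the operator-norm convergence $T_n \to T$. I would fix a small circle $\Gamma \subset \mathbb{C}$ enclosing $\mu$ but no other point of $\sigma(T)$, which is possible because the nonzero spectrum of a compact operator is discrete with $0$ as the only possible accumulation point. Setting
\[
E = \frac{1}{2\pi \mathrm{i}} \oint_\Gamma (zI - T)^{-1}\,dz, \qquad E_n = \frac{1}{2\pi \mathrm{i}} \oint_\Gamma (zI - T_n)^{-1}\,dz,
\]
and using the resolvent identity $(zI - T_n)^{-1} - (zI - T)^{-1} = (zI - T_n)^{-1}(T_n - T)(zI - T)^{-1}$ together with \eqref{eq:normc}, I would first show that $(zI - T_n)^{-1}$ is well defined and uniformly bounded on $\Gamma$ for $n$ large, and hence that $\|E - E_n\|_{\mathcal{L}(L;L)} \to 0$. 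Since rank is lower semicontinuous under norm convergence of projectors, this forces $\dim \operatorname{range}(E_n) = \dim \operatorname{range}(E) = \varrho$ for $n$ large; thus exactly $\varrho$ eigenvalues of $T_n$ lie inside $\Gamma$, and shrinking $\Gamma$ shows that they converge to $\mu$.

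For the quantitative estimate I would work on the finite-dimensional invariant subspaces $G_\mu = \operatorname{range}(E)$ and $R_n := \operatorname{range}(E_n)$, together with the restricted operators $\hat T = T|_{G_\mu}$ and $\hat T_n = T_n|_{R_n}$. The transfer map $E|_{R_n}\colon R_n \to G_\mu$ is an isomorphism for $n$ large, so conjugating $\hat T_n$ through it produces an operator $\tilde T_n$ on $G_\mu$ with the same spectrum $\{\mu_{n,1},\dots,\mu_{n,\varrho}\}$. The identity
\[
\varrho\,(\langle \mu_n \rangle - \mu) \;=\; \operatorname{tr}\bigl(\tilde T_n - \mu I_{G_\mu}\bigr) \;=\; \operatorname{tr}\bigl((T - T_n)|_{G_\mu}\bigr) + \mathrm{Rem}_n
\]
is the heart of the argument: expanded in a biorthogonal basis $\{\phi_j\}\subset G_\mu$, $\{\psi_j\}\subset G_\mu^*$ of the spectral pair, the leading trace becomes a sum of sesquilinear forms $((T - T_n)\phi_j, \psi_j)_L$, which produces the first supremum on the right of \eqref{eq:cv_eigenval}. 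The remainder $\mathrm{Rem}_n$ encodes the mismatch between $E$ and $E_n$; after using the adjoint identity $((T - T_n)\phi,\psi)_L = (\phi,(T - T_n)^*\psi)_L$ and the $O(\|T-T_n\|)$ control on $E - E_n$ obtained above, it is dominated by the product $\|(T - T_n)|_{G_\mu}\|\,\|(T - T_n)^*|_{G_\mu^*}\|$, which supplies the second term.

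The individual estimate $\max_j |\mu - \mu_{n,j}|^\varsigma$ is the most delicate point, because when the ascent $\varsigma$ exceeds one the operator $\hat T$ has nontrivial Jordan blocks at $\mu$ and the individual distances $|\mu - \mu_{n,j}|$ cannot be bounded linearly in the perturbation. I would write the characteristic polynomial of $\tilde T_n - \mu I$ on $G_\mu$ and estimate its coefficients by Schur-type bounds that use the nilpotent identity $(\hat T - \mu I)^\varsigma = 0$; the exponent $\varsigma$ emerges from the fact that $(\tilde T_n - \mu I)^\varsigma$ must absorb $\varsigma$ powers of the perturbation before it can produce a nonzero effect on $G_\mu$, so that $\prod_j (\mu - \mu_{n,j})$ is of order $\|T-T_n\|^{\varsigma}$ after averaging. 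The main obstacle will be precisely this Jordan-structure bookkeeping, combined with the need to isolate, in one stroke, the leading sesquilinear contribution from the quadratic correction so that the estimate on the arithmetic mean $\langle \mu_n \rangle$ is genuinely sharper than the bound on each individual $\mu_{n,j}$. In the self-adjoint setting used later in the paper $\varsigma = 1$ and $T = T^*$, so both subtleties collapse and the estimate reduces to a min--max perturbation inequality; since the theorem is stated in full generality, however, the Jordan analysis must be carried out.
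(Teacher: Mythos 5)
The paper does not prove Theorem~\ref{thm:eve0}: Section~\ref{sec:theory} explicitly \emph{recalls} it as part of the abstract spectral approximation theory for compact operators (the classical results of Osborn and Babu\v{s}ka--Osborn cited in the introduction), so there is no in-paper argument to measure yours against. Your sketch reconstructs the standard Osborn proof along the correct lines: Dunford--Taylor spectral projectors $E$, $E_n$; the resolvent identity plus a Neumann-series argument giving uniform invertibility of $zI-T_n$ on the contour; $\|E-E_n\|_{\mathcal{L}(L;L)}\to 0$ and hence equality of the ranks $\varrho$; reduction to a conjugated finite-dimensional operator $\tilde T_n$ on $G_\mu$; a trace identity for the arithmetic mean expanded in a biorthogonal basis of $G_\mu\times G_\mu^*$; and the nilpotency of $(\mu I-T)|_{G_\mu}$ to produce the exponent $\varsigma$. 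This is the right (and essentially the only) route.

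Two steps are asserted rather than established, and one would fail as written. First, the remainder $\mathrm{Rem}_n$: you propose to dominate it using ``the $O(\|T-T_n\|)$ control on $E-E_n$.'' That yields only one power of the global operator norm $\|T-T_n\|_{\mathcal{L}(L;L)}$, whereas the theorem needs the \emph{product} $\|(T-T_n)|_{G_\mu}\|\,\|(T-T_n)^*|_{G_\mu^*}\|$ of the two restricted norms --- this is precisely what later delivers the doubled rate $h^{2p}$ in Theorem~\ref{thm:ev}, versus the $h^{p+1}$ a crude projector bound would give. Obtaining the product requires the second-order algebraic identity for $\tilde T_n-T_n$ on $G_\mu$, pairing one factor of the perturbation against $G_\mu$ and the other against $G_\mu^*$, not the first-order bound on $E-E_n$. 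Second, the individual estimate $\max_j|\mu-\mu_{n,j}|^\varsigma$: controlling the characteristic polynomial, i.e.\ $\prod_j(\mu-\mu_{n,j})$, does not bound the maximum of the individual factors from above, and a determinant expansion naturally produces the geometric multiplicity as exponent rather than the ascent. The standard argument instead takes a unit eigenvector $w_{n,j}$ of $\tilde T_n$, writes $(\mu-\mu_{n,j})^\varsigma w_{n,j}=(\mu I-\tilde T_n)^\varsigma w_{n,j}$, expands $(\mu I-\tilde T_n)^\varsigma=\bigl((\mu I-T)+(T-\tilde T_n)\bigr)^\varsigma$ as a sum of non-commuting words, observes that the single word $(\mu I-T)^\varsigma$ annihilates $G_\mu$ while every surviving word carries at least one factor of $(T-\tilde T_n)|_{G_\mu}$, and then pairs with a uniformly bounded dual vector in $G_\mu^*$. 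With these two repairs your outline becomes the textbook proof.
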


\begin{remark}[Convergence of the arithmetic mean]
Equation \eqref{eq:cv_eigenval} shows that for $\varsigma\ge 2$, the arithmetic mean of the eigenvalues has a better convergence rate than each eigenvalue individually. 
\end{remark}

\begin{theorem}[Convergence of the eigenfunctions] \label{thm:efe0}
Let $\mu\in \sigma(T)\setminus\{0\}$ with  
ascent $\varsigma$ and algebraic multiplicity $\varrho$.
Let $\mu_{n, j}$ be an eigenvalue of $T_n$ that converges to $\mu$. Let $w_{n,j}$ be a unit vector in $\text{ker}(\mu_{n,j}I-T_n)^\ell$ for some positive integer $\ell \le \varsigma$. Then, for any integer $\varrho$ with $\ell \le \varrho\le \varsigma$, there is a vector $u_\varrho\in \text{\em ker}(\mu I- T)^\varrho\subset G_\mu$ such that
\begin{equation}
\|u_\varrho - w_{n,j}\|_L \le C \|(T-T_n)|_{G_\mu}\|_{\mathcal{L}(G_\mu; L)}^{\frac{\varrho - \ell + 1}{\varsigma}},
\end{equation}
where $C$ depends on $\mu$ but is independent of $n$.
\end{theorem}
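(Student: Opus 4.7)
The plan is to construct the candidate vector $u_\varrho$ by the Dunford--Taylor spectral projection of $w_{n,j}$ onto $G_\mu$, and then refine the resulting linear estimate using the Jordan structure of $T$ on $G_\mu$ together with the eigenvalue rate from Theorem~\ref{thm:eve0}.

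Fix a positively oriented Jordan curve $\Gamma$ in the resolvent set of $T$ enclosing only $\mu$ among $\sigma(T)$. For $n$ large, $\Gamma$ also lies in the resolvent set of $T_n$ and encloses exactly the cluster of eigenvalues of $T_n$ converging to $\mu$ (by Theorem~\ref{thm:eve0}). Define the spectral projections
\begin{equation*}
E=\frac{1}{2\pi i}\oint_\Gamma (zI-T)^{-1}\,dz,\qquad E_n=\frac{1}{2\pi i}\oint_\Gamma (zI-T_n)^{-1}\,dz,
\end{equation*}
so that $\mathrm{range}(E)=G_\mu$, $\mathrm{range}(E_n)$ is the $T_n$-invariant subspace containing $w_{n,j}$, and $E_n w_{n,j}=w_{n,j}$. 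Let $u:=Ew_{n,j}\in G_\mu$ be the preliminary candidate and set $\gamma_n:=\|(T-T_n)|_{G_\mu}\|_{\mathcal{L}(G_\mu;L)}$. The resolvent identity
\begin{equation*}
(zI-T)^{-1}-(zI-T_n)^{-1}=(zI-T)^{-1}(T-T_n)(zI-T_n)^{-1},
\end{equation*}
together with the uniform boundedness of $(zI-T_n)^{-1}E_n$ on $\Gamma$ (a rational function of $T_n$ on the finite-dimensional invariant subspace), delivers the baseline estimate $\|u-w_{n,j}\|_L\le C\gamma_n$.

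To obtain the sharper exponent $(\varrho-\ell+1)/\varsigma$, use that $N:=(\mu I-T)|_{G_\mu}$ is nilpotent of index $\varsigma$, so a finite-dimensional distance lemma (verified in a Jordan basis of $N$) yields a constant $C$ with
\begin{equation*}
\mathrm{dist}_L\bigl(v,\ker(\mu I-T)^\varrho\bigr)\le C\,\|N^\varrho v\|_L\qquad\text{for all }v\in G_\mu.
\end{equation*}
Applied to $v=u$, it remains to estimate $\|N^\varrho u\|_L=\|E(\mu I-T)^\varrho w_{n,j}\|_L$ (using the commutation $E(\mu I-T)=(\mu I-T)E$). The decomposition
\begin{equation*}
(\mu I-T)=(\mu_{n,j}I-T_n)+(\mu-\mu_{n,j})I+(T_n-T),
\end{equation*}
when inserted into the $\varrho$-th power and expanded non-commutatively, produces monomials of length $\varrho$ in the three summands. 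The identity $(\mu_{n,j}I-T_n)^\ell w_{n,j}=0$ annihilates every monomial whose action on $w_{n,j}$ applies $(\mu_{n,j}I-T_n)$ at least $\ell$ times before any other factor intervenes. A careful accounting of the surviving terms, combined with $|\mu-\mu_{n,j}|\le C\gamma_n^{1/\varsigma}$ from Theorem~\ref{thm:eve0} and $\|(T-T_n)|_{G_\mu}\|=\gamma_n\le\gamma_n^{1/\varsigma}$, gives
\begin{equation*}
\|N^\varrho u\|_L\le C\gamma_n^{(\varrho-\ell+1)/\varsigma}.
\end{equation*}
Choosing $u_\varrho\in\ker(\mu I-T)^\varrho$ as the nearest point to $u$ and applying the triangle inequality with the baseline bound (observing $\gamma_n\le\gamma_n^{(\varrho-\ell+1)/\varsigma}$, valid because the exponent does not exceed one) yields the claim.

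The main obstacle will be the delicate monomial bookkeeping in the expansion of $(\mu I-T)^\varrho w_{n,j}$ and its projection by $E$: identifying which summands survive the annihilation by $(\mu_{n,j}I-T_n)^\ell w_{n,j}=0$, bounding the intermediate $(\mu_{n,j}I-T_n)$-factors uniformly in $n$ on $\mathrm{range}(E_n)$, and extracting the exact count of $(\mu-\mu_{n,j})I$- and $(T_n-T)$-factors per surviving term that produces the exponent $(\varrho-\ell+1)/\varsigma$. The spectral projection apparatus and the nilpotent distance lemma are by now classical; the combinatorial extraction of the sharp exponent, powered by the eigenvalue rate from Theorem~\ref{thm:eve0}, is the heart of the argument.
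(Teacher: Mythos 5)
First, a point of reference: the paper does not prove Theorem~\ref{thm:efe0} at all. Section~\ref{sec:theory} merely \emph{recalls} this result from the classical spectral approximation theory for compact operators (Osborn 1975; Chatelin 1983; Babu\v{s}ka--Osborn 1991), so there is no in-paper proof to compare against. Your outline does follow the classical Osborn route --- Dunford--Taylor projections, a nilpotent ``distance'' lemma on $G_\mu$, and a non-commutative expansion of $(\mu I-T)^\varrho$ exploiting $(\mu_{n,j}I-T_n)^\ell w_{n,j}=0$ --- so the skeleton is the right one. But the write-up has a genuine gap beyond the explicitly deferred bookkeeping, and it sits exactly where the classical proof has to work hardest: every occurrence of $(T-T_n)$ in your argument acts on vectors that live in $\mathrm{range}(E_n)$, not in $G_\mu=\mathrm{range}(E)$. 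In the baseline step, $(zI-T)^{-1}(T-T_n)(zI-T_n)^{-1}w_{n,j}$ applies $T-T_n$ to $(zI-T_n)^{-1}w_{n,j}\in\mathrm{range}(E_n)$; in the expansion, the rightmost $(T_n-T)$ factor of each surviving monomial acts on $(\mu_{n,j}I-T_n)^{j}w_{n,j}\in\mathrm{range}(E_n)$. So what you actually control is $\|(T-T_n)|_{\mathrm{range}(E_n)}\|$ (or the full norm $\|T-T_n\|_{\mathcal{L}(L;L)}$), not the quantity $\gamma_n=\|(T-T_n)|_{G_\mu}\|$ appearing in the statement. This is not cosmetic: in the present application $\|(T-T_h)|_{G_\mu}\|=\mathcal{O}(h^{p+1})$ by the smoothness assumption \eqref{eq:smT}, whereas $\|T-T_h\|_{\mathcal{L}(L^2;L^2)}$ is only $\mathcal{O}(h^2)$ because a generic $f\in L^2(\Omega)$ yields $Tf$ that is merely piecewise $H^2$; substituting the wrong norm destroys the high-order rate the theorem is used to deliver.

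The standard repair (Osborn's) is to \emph{not} take $u=Ew_{n,j}$. One first shows that $E_n|_{G_\mu}:G_\mu\to\mathrm{range}(E_n)$ is a bijection with uniformly bounded inverse for large $n$ (a consequence of $\|(E-E_n)|_{G_\mu}\|\le C\gamma_n\to 0$, which \emph{is} obtainable from the resolvent identity because there $T-T_n$ acts on $(zI-T)^{-1}f$ with $f\in G_\mu$ and $G_\mu$ is $T$-invariant). Then one writes $w_{n,j}=E_nf$ with $f\in G_\mu$, $\|f\|_L\le C$, sets the candidate to be built from $Ef$, and all subsequent applications of $T-T_n$ hit vectors of $G_\mu$, yielding the restricted norm. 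With that fix, your remaining plan closes cleanly, and the ``delicate'' bookkeeping is in fact simple once you note that $(\mu-\mu_{n,j})I$ commutes with everything: pull it out binomially, so the only surviving monomial with no $(T_n-T)$ factor is $(\mu-\mu_{n,j})^{i}(\mu_{n,j}I-T_n)^{\varrho-i}$ with $\varrho-i\le\ell-1$, i.e.\ $i\ge\varrho-\ell+1$, contributing $|\mu-\mu_{n,j}|^{\varrho-\ell+1}\le C\gamma_n^{(\varrho-\ell+1)/\varsigma}$; every other surviving term carries at least one full factor $\gamma_n\le\gamma_n^{(\varrho-\ell+1)/\varsigma}$. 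You should also record the hypothesis $\gamma_n\le 1$ (true for $n$ large) that you use twice when comparing powers of $\gamma_n$.
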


\subsection{Error estimation for the source problem}
In this section, we generalize the analysis results established in \cite{babuvska2012stable,babuvska2017strongly} for the linear SGFEM discretization of the source problem to arbitrary order in 1D setting. We use the standard notations for the norms and semi-norms of the Sobolev spaces. Let $S\subset \overline\Omega$ be a subdomain. We denote by $| \cdot |_{H^1(S)}$,  $| \cdot |_{H^2(S)}$, and $\| \cdot \|_{L^2(S)}$ the $H^1$ semi-norm, $H^2$ semi-norm, and $L^2$ norm. Let $\| \cdot \|_E = \sqrt{a(\cdot, \cdot)}$ be the energy norm, which is equivalent to the $H^1$ semi-norm.

First of all, the bilinear form is coercive and bounded \cite{babuvska2017strongly}, i.e., there holds
\begin{equation} \label{eq:a}
a(v, v) \ge \alpha | v |^2_{H^1(\Omega)} \qquad \text{and} \qquad a(v, w) \le \beta | v |_{H^1(\Omega)} | w |_{H^1(\Omega)},
\end{equation}
where $\alpha, \beta$ are constants mainly depending on the coefficient $\kappa$. We first present the following Lemma which plays a crucial role in proving the optimal convergence for arbitrary order $p$. For $p=1$, we refer to \cite{babuvska2017strongly}.

\begin{lemma}[Local interpolant] \label{lem:P}
Let $P(x)$ be a piecewise polynomial on a reference interval $[0, 1]$ with an interface $0<\nu<1$, which is defined as
\begin{equation}
P(x) = 
\begin{cases}
P_0(x) = a_p x^p + \cdots + a_1 x + a_0, \  0 \le x \le \nu, \\
P_1(x) = b_p x^p + \cdots + b_1 x + b_0, \  \nu \le x \le 1, \\
\end{cases}
\end{equation}
where both $P_0$ and $P_1$ are defined at $\nu$ to be equal such that $P(x)$ is continuous. Let $\tilde N_j(x) = x^j, j=0,1,\cdots, p$, be defined on $[0, 1]$. Then there exists a unique set of coefficients $\alpha_j$ and $\beta_j$ with $j=0,1, \cdots, p$ such that
\begin{equation} \label{eq:qp0}
P(x) = \sum_{j=0}^p \big( \alpha_j \tilde N_j + \beta_j w \tilde N_j \big) := Q(x), \quad \forall \ x  \in [0, 1],
\end{equation}
where $w$ is defined on $[0,1 ]$ as in subsection \ref{sec:sgfem} for SGFEM.
\end{lemma}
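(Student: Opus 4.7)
The plan is to establish both existence and uniqueness by reducing the identity $P(x)=Q(x)$ to an explicit, essentially triangular linear system in the unknowns $\{\alpha_j,\beta_j\}_{j=0}^p$, and then showing that the assumed continuity of $P$ at $\nu$ is precisely the compatibility condition that makes that system consistent.

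The first step is to write the enrichment out on each subinterval. Using $w=\mathcal{I}_h w^*-w^*$ with $w^*=|x-\nu|$ and the linear interpolant taking values $\nu$ at $0$ and $1-\nu$ at $1$, one obtains $w(x)=2(1-\nu)x$ on $[0,\nu]$ and $w(x)=2\nu(1-x)$ on $[\nu,1]$. Substituting into $Q$ and collecting by powers of $x$ turns each side into a genuine polynomial whose coefficients are affine functions of $\alpha_j$ and $\beta_j$.

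Next I would match these coefficients with those of $P_0$ and $P_1$. On $[0,\nu]$ this yields $\alpha_0=a_0$, the relations $\alpha_j+2(1-\nu)\beta_{j-1}=a_j$ for $j=1,\dots,p$, and $2(1-\nu)\beta_p=0$; on $[\nu,1]$ one gets $\alpha_0+2\nu\beta_0=b_0$, the relations $\alpha_j+2\nu(\beta_j-\beta_{j-1})=b_j$, and $-2\nu\beta_p=0$. Both top-degree equations force $\beta_p=0$. Subtracting the level-$j$ equations on the two sides eliminates $\alpha_j$ and gives the one-step recursion
\begin{equation*}
2\nu\beta_j=2\beta_{j-1}+(b_j-a_j),\qquad j=1,\dots,p,
\end{equation*}
with the starting value $\beta_0=(b_0-a_0)/(2\nu)$, after which each $\alpha_j$ is read off from $\alpha_j=a_j-2(1-\nu)\beta_{j-1}$. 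This lower-triangular solve establishes uniqueness and, together with the recovered values, existence modulo the single closure condition $\beta_p=0$.

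The crux is then to verify the closure. Telescoping the recursion yields $2\nu^{p+1}\beta_p=\sum_{j=0}^{p}(b_j-a_j)\nu^j=P_1(\nu)-P_0(\nu)$, so $\beta_p=0$ is \emph{equivalent} to the continuity $P_0(\nu)=P_1(\nu)$ that is already part of the hypotheses. I expect the main obstacle to be cleanly handling the apparent over-determination: the matching produces $2p+2$ coefficient equations for $2p+2$ unknowns plus the spurious-looking constraint $\beta_p=0$ that appears on both subintervals, so the accounting between polynomial coefficient matching and the single reliance on the continuity assumption must be done carefully. Once this collapse to the continuity identity is identified, the remainder is bookkeeping.
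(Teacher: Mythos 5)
Your proof is correct and follows essentially the same route as the paper: both reduce the identity $P=Q$ to a triangular linear system by matching polynomial data on each subinterval (the paper matches derivatives at $x=0$ and $x=1$ plus two point values, you match monomial coefficients directly). Your telescoped recursion $2\nu\beta_j = 2\beta_{j-1}+(b_j-a_j)$ with $\beta_0=(b_0-a_0)/(2\nu)$, which exhibits the closure condition $\beta_p=0$ as exactly the continuity $P_0(\nu)=P_1(\nu)$, is cleaner bookkeeping than the paper's upper-triangular system with falling-factorial entries, and your accounting of the overdetermination ($2p+4$ equations, one duplicated top-degree condition, one constraint absorbed by continuity) is sound.
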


\begin{proof}
Firstly, since $P(x)$ is continuous at $x=\nu$, there holds
\begin{equation} \label{eq:b0}
b_0  = \sum_{l=1}^p  (b_l -a_l) \nu^l - a_0,
\end{equation}
thus, $b_0$ is a parameter determined by other coefficients. On the other hand, $w$ is written as
\begin{equation}
w(x) = 
\begin{cases}
w_0(x) = 2(1 - \nu) x, \  0 \le x \le \nu, \\
w_1(x) = 2\nu (1 - x), \  \nu \le x \le 1. \\
\end{cases}
\end{equation}

We first show the existence. Consider the $n$-th order derivatives of $P(x)$ and $Q(x)$ at $x=0$ (right derivatives) and $x=1$ (left derivatives) for $n=1,2, \cdots, p+1$ and $P(x) = Q(x)$ for $x=0, \nu$. We set and solve
\begin{equation}
\begin{aligned}
Q^{(n)}(x) & = P^{(n)}(x) \quad \text{for} \quad x=0,1, \qquad \forall \ n=1,2, \cdots, p+1, \\
P(0) & = Q(0), \qquad P(\nu)  = Q(\nu),
\end{aligned}
\end{equation}
which is
\begin{equation} \label{eq:qp1}
\begin{aligned}
\alpha_0 & = a_0, \\
\alpha_k + 2(1 - \nu) \beta_{k-1} & = a_k, \ k = 1,2, \cdots, p, \\
(p+1)! \cdot 2(1- \nu) \beta_p & = 0
\end{aligned}
\end{equation}
 and
\begin{equation} \label{eq:qp2}
\begin{aligned}
\sum_{l=0}^p \nu^l \alpha_l + 2 (1 - \nu) \nu^{l+1} \beta_l & =  \sum_{l=0}^p a_l \nu^l, \\
\sum_{l=k}^p A_l^k \alpha_k - 2\nu \Big( A_k^k \beta_{k-1} + \sum_{l=k}^p (A_{l+1}^k - A_l^k) \beta_l \Big)& = \sum_{l=k}^p A_l^k b_k, \ k = 1,2, \cdots, p, \\
(p+1)! \cdot (-2\nu) \beta_p & = 0
\end{aligned}
\end{equation}
with $A_l^k = \frac{l!}{(l-k)!}.$
 Solving \eqref{eq:qp1} for $\alpha_l, l=0,1, \cdots, p$ and $\beta_p (=0)$ to plug into \eqref{eq:qp2} yields a reduced system
\begin{equation} 
\begin{aligned}
2 k! \beta_{k-1} + 2 \sum_{l=k+1}^p (A_l^k - \nu A_{l-1}^k) \beta_{l-1} & = \sum_{l=k}^p A_l^k a_l - \tilde b_k, \ k = 1,2, \cdots, p, \\
\end{aligned}
\end{equation}
which is written as 
\begin{equation*} 
\begin{bmatrix}
1 & 2-\nu & 3-2\nu & \cdots & \cdots & \cdots \\
 0 & 2! & 2!(3-\nu) & \cdots & \cdots & \cdots\\
0 & 0 & 3! & \cdots & \cdots & \cdots\\
\vdots & \vdots & \vdots & \ddots & \cdots & \cdots\\
0 & 0 & 0 & \cdots & (p-1)! & (p-1)! (p - \nu) \\
0 & 0 & 0 & \cdots & 0 & p! \\
\end{bmatrix}
\begin{bmatrix}
\beta_0 \\
\beta_1 \\
\beta_2 \\
\vdots \\
\beta_{p-2} \\
\beta_{p-1} \\
\end{bmatrix}
=
\begin{bmatrix}
\frac{\sum_{k=1}^p A_k^1 (a_k - b_k) }{2} \\
\frac{\sum_{k=2}^p A_k^2 (a_k - b_k) }{2} \\
\frac{\sum_{k=3}^p A_k^3 (a_k - b_k) }{2} \\
\vdots \\
\frac{A_p^p (a_p - b_p) }{2} \\
\end{bmatrix}.
\end{equation*}
Obviously, there is a unique solution to the above matrix problem and hence a unique set of $\alpha_l, \beta_l, l=0,1,\cdots,p$. This completes the proof.
\end{proof}

\begin{lemma}[Existence of global interpolant] \label{lem:uh*}
Let $u \in C^0(\overline\Omega)$ be the solution of \eqref{eq:pdef} and assume that $u|_{\Omega_j} \in H^{p+1}(\Omega_j)$ for $j=0,1$. For a fixed grid with $N$ elements, let $V^h$ be the $p$-th order SGFEM approximation space. Then there exists $u^*_h \in V^h$ such that
\begin{equation} \label{eq:ie}
\begin{aligned}
\| u - u^*_h \|_{L^2(\Omega)} & \le Ch^{p+1}(|u |_{H^{p+1}(\Omega_0)} + |u |_{H^{p+1}(\Omega_1)}),  \\
| u - u^*_h |_{H^1(\Omega)} & \le Ch^p(|u |_{H^{p+1}(\Omega_0)} + |u |_{H^{p+1}(\Omega_1)}),
\end{aligned}
\end{equation}
where $C$ is independent of $h$.  
\end{lemma}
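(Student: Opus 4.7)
The strategy is to construct $u_h^*$ element-by-element, distinguishing the unique interface element $\tau_r$ (containing $\gamma$) from every other element. On any $\tau_j$ with $j\neq r$, we have $\tau_j\subset\Omega_0$ or $\tau_j\subset\Omega_1$, so $u|_{\tau_j}\in H^{p+1}(\tau_j)$; I would set $u_h^*|_{\tau_j}$ to be the standard $p$-th order Lagrange interpolant of $u$ on $\tau_j$, which already lies in the FEM part of $V^h$. The classical Bramble--Hilbert estimate then yields
\begin{equation*}
\|u-u_h^*\|_{L^2(\tau_j)}\le Ch_j^{p+1}|u|_{H^{p+1}(\tau_j)},\qquad |u-u_h^*|_{H^1(\tau_j)}\le Ch_j^{p}|u|_{H^{p+1}(\tau_j)}.
\end{equation*}

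On the interface element $\tau_r=[x_{r-1},x_r]$, I would first build an auxiliary piecewise Lagrange interpolant $\tilde u_r$: on each sub-interval $[x_{r-1},\gamma]$ and $[\gamma,x_r]$, take $\tilde u_r$ to be the $p$-th order Lagrange interpolant of $u$ at $p+1$ nodes that include the two endpoints. Since $u$ is continuous at $\gamma$ and $u|_{\Omega_j}\in H^{p+1}(\Omega_j)$ gives the required smoothness on each side, the two pieces share the value $u(\gamma)$, so $\tilde u_r$ is a continuous piecewise polynomial of degree $p$ on $\tau_r$. Pulling $\tau_r$ back affinely to $[0,1]$ (so $\gamma\mapsto\nu=(\gamma-x_{r-1})/h_r$), Lemma \ref{lem:P} provides a unique representation $\tilde u_r=\sum_{j=0}^{p}(\alpha_j N_j^p+\beta_j wN_j^p)$, which places $\tilde u_r\in V^h|_{\tau_r}$; set $u_h^*|_{\tau_r}=\tilde u_r$. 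Global continuity across $x_{r-1}$ and $x_r$ is automatic because every enriched function $wN_j^p$ vanishes at those nodes (since $\I_h w^*$ interpolates $w^*$ there, so $w=0$), leaving the nodal value dictated by the standard basis, which from both adjacent elements evaluates to $u(x_{r-1})$, respectively $u(x_r)$; the homogeneous boundary conditions are inherited from $u\in H^1_0(\Omega)$. Applying the standard local polynomial interpolation estimate on each sub-interval of $\tau_r$, together with $|u|_{H^{p+1}([x_{r-1},\gamma])}\le|u|_{H^{p+1}(\Omega_0)}$ and the analogous bound on $[\gamma,x_r]$, furnishes local errors of order $h^{p+1}$ in $L^2$ and $h^p$ in $H^1$. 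Summing the squared local errors over all $N$ elements and taking square roots produces the two claimed global bounds.

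The main technical obstacle lies in Step 2: verifying that the abstract continuous piecewise polynomial $\tilde u_r$ of degree $p$ on $\tau_r$ really does belong to the SGFEM subspace spanned by $\{N_j^p\}\cup\{wN_j^p\}$ restricted to $\tau_r$. A naive dimension count confronts $2p+2$ SGFEM basis functions with only $2p+1$ free parameters in a continuous degree-$p$ piecewise polynomial, and it is exactly this mismatch that Lemma \ref{lem:P} resolves: its algebra forces $\beta_p=0$ automatically, after which the reduced linear system for $\beta_0,\ldots,\beta_{p-1}$ is upper triangular with nonzero diagonal and hence uniquely solvable. A secondary subtlety is that the interpolation constants must not degenerate as $\gamma$ approaches a node; this is handled because each sub-interval of $\tau_r$ has length at most $h_r\le h$ and the one-dimensional Lagrange interpolation error depends only on sub-interval length and polynomial degree, not on the relative position of $\gamma$.
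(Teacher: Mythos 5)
Your proposal is correct and follows essentially the same route as the paper: construct the piecewise Lagrange interpolant of $u$ on each subdomain (splitting the interface element at $\gamma$), invoke Lemma \ref{lem:P} on the affinely rescaled interface element to place the resulting continuous piecewise polynomial in $V^h$, and sum the standard local interpolation estimates. The only cosmetic difference is that the paper verifies membership in $V^h$ by applying Lemma \ref{lem:P} to the difference between this interpolant and the global FEM interpolant (which is supported on $\tau_r$ alone), whereas you apply it directly to the local piece and check inter-element continuity via the vanishing of $w$ at the nodes; both are valid.
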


\begin{proof}
Let $u^{\#}_{h,j}$ defined on $\overline\Omega_j$ be the piecewise $p$-th order Lagrange polynomial interpolant of $u$ restricted to $\overline\Omega_j$ for $j=0,1$. By piecewise we mean that it is a $p$-th order polynomial when restricted to the elements $\tau_l, l=1,2,\cdots, r-1$ and $\tau_{j+1/2}$ (that is all the elements in $\Omega_0$) for $j=0$ and $\tau_l, l=r+1,r+2,\cdots, N$ and $\tau_{j+1/2}$ (that is all the elements in $\Omega_1$) for $j=1$. Since $u|_{\Omega_j} \in H^{p+1}(\Omega_j)$, from interpolation theory (see for example\cite{ern2004theory,brenner2008mathematical,johnson2009numerical}), there holds
\begin{equation} \label{eq:l2h1uh01}
\| u - u^{\#}_{h,j} \|_{L^2(\Omega_j)} \le Ch^{p+1}|u |_{H^{p+1}(\Omega_j)}  \quad \text{and} \quad | u - u^{\#}_{h,j} |_{H^1(\Omega_j)} \le Ch^p|u |_{H^{p+1}(\Omega_j)}.
\end{equation}

Now, we define a piecewise polynomial
\begin{equation}
u^{\#}_h =
\begin{cases}
  u^{\#}_{h,0}, \quad \text{for} \quad x \in \overline\Omega_0 = [0, \gamma], \\
  u^{\#}_{h,1}, \quad \text{for} \quad x \in \overline\Omega_1 = [\gamma, 1], \\
\end{cases}  
\end{equation}
where both $u^{\#}_{h,0}$ and $u^{\#}_{h,1}$ are defined at $x=\gamma$ as $u^{\#}_{h,0} = u^{\#}_{h,1} = u(\gamma)$ since $u \in C^0(\overline\Omega)$. Using \eqref{eq:l2h1uh01}, we calculate
\begin{equation}
\| u - u^{\#}_h \|^2_{L^2(\Omega)} = \| u - u^{\#}_{h,0} \|^2_{L^2(\Omega_0)} + \| u - u^{\#}_{h,1} \|^2_{L^2(\Omega_1)} \le Ch^{2p+2} (|u |^2_{H^{p+1}(\Omega_0)} + |u |^2_{H^{p+1}(\Omega_0)}),
\end{equation}
which, by taking the square root on both sides, we obtain
\begin{equation}
\| u - u^{\#}_h \|_{L^2(\Omega)} \le Ch^{p+1} (|u |_{H^{p+1}(\Omega_0)} + |u |_{H^{p+1}(\Omega_0)}).
\end{equation}

Similarly, one obtains
\begin{equation}
  | u - u^{\#}_h |_{H^1(\Omega)} \le Ch^p(|u |_{H^{p+1}(\Omega_0)} + |u |_{H^{p+1}(\Omega_1)}).
\end{equation}

Now we show that $u^{\#}_h \in V^h$. Let $u^o_h$ be the piecewise $p$-th order Lagrange polynomial interpolant of $u$ on the mesh $\T_h$. Then, $u^o_h \in V^h_{FEM} \subset V^h$. To show that $u^{\#}_h \in V^h$, it is equivalent to show that $u^{\#}_h - u^o_h \in V^h$.
For SGFEM, we note that $u^{\#}_h - u^o_h$ is continuous and has the support over only one element $\tau_r = [x_{r-1}, x_r]$ since both interpolant $u^{\#}_h$ and $u^o_h$ are the same outside the element $\tau_r$. Moreover, $u^{\#}_h - u^o_h $ is a $p$-th order polynomial when restricted to $[x_{r-1}, \gamma]$ or $[\gamma, x_r]$. With a linear transformation from $[x_{r-1}, \gamma]$ to $[1, \nu]$ and $[\gamma, x_r]$ to $[\nu, 1]$, using the Lemma \ref{lem:P} we conclude that $u^{\#}_h - u^o_h \in V^h$. Thus, setting $u^*_h = u^{\#}_h$ completes the proof.
\end{proof}

\begin{remark}
$u^*_h$ in Lemma \ref{lem:uh*} is not unique. The piecewise interpolant $u^{\#}_h$ defined above  is such a function satisfying \eqref{eq:ie}. 
\end{remark}

Now we present the optimal error estimations for the source problem for arbitrary order SGFEM in 1D.

\begin{theorem}[Error estimates for source problem] \label{thm:ef}
Let $u \in C^0(\overline\Omega)$ be the solution of \eqref{eq:pdef} and assume that $u|_{\Omega_j} \in H^{p+1}(\Omega_j)$ for $j=0,1$. Assuming that $u^h \in V^h$ is the solution of the source problem \eqref{eq:vfhf} where $V^h$ is the $p$-th order SGFEM approximation space, then there holds
\begin{equation}
\begin{aligned}
\| u - u^h \|_{L^2(\Omega)} & \le Ch^{p+1}(|u |_{H^{p+1}(\Omega_0)} + |u |_{H^{p+1}(\Omega_1)}),  \\
| u - u^h |_{H^1(\Omega)} & \le Ch^p(|u |_{H^{p+1}(\Omega_0)} + |u |_{H^{p+1}(\Omega_1)}),
\end{aligned}
\end{equation}
where $C$ is independent of $h$.  
\end{theorem}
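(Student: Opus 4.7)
The plan is to combine Galerkin orthogonality with the quasi-best-approximation property of a Galerkin method (Céa's lemma) and then invoke Lemma \ref{lem:uh*} to get the optimal interpolation error in $V^h$. A standard duality (Aubin--Nitsche) argument will then lift the $H^1$-estimate to the sharper $L^2$-estimate.

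First I would observe that subtracting \eqref{eq:weakf} from \eqref{eq:vfhf} and using $V^h \subset H^1_0(\Omega)$ yields Galerkin orthogonality $a(u - u^h, w^h) = 0$ for all $w^h \in V^h$. Combined with the coercivity and boundedness of $a$ recorded in \eqref{eq:a}, this gives the Céa-type bound
\begin{equation*}
| u - u^h |_{H^1(\Omega)} \le \tfrac{\beta}{\alpha}\, \inf_{v^h \in V^h} | u - v^h |_{H^1(\Omega)}.
\end{equation*}
Taking $v^h = u^*_h$, the interpolant furnished by Lemma \ref{lem:uh*}, immediately yields the claimed $H^1$-estimate. Under the hypothesis $u|_{\Omega_j}\in H^{p+1}(\Omega_j)$ this step is entirely routine once Lemma \ref{lem:uh*} is in hand.

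For the $L^2$-estimate I would use the Aubin--Nitsche duality trick. Set $e = u - u^h$ and let $\phi \in H^1_0(\Omega)$ be the solution of the adjoint source problem $a(w,\phi) = (e,w)_{L^2(\Omega)}$ for all $w \in H^1_0(\Omega)$, i.e. $\phi = T^*(e) = T(e)$ by self-adjointness. Standard elliptic regularity applied separately on each subdomain $\Omega_j$ (where $\kappa$ is smooth) gives $\phi|_{\Omega_j}\in H^2(\Omega_j)$ with $|\phi|_{H^2(\Omega_0)} + |\phi|_{H^2(\Omega_1)} \le C\|e\|_{L^2(\Omega)}$, and $\phi\in C^0(\overline\Omega)$ by the transmission condition. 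Choosing $w = e$ and using Galerkin orthogonality against any $\phi^*_h \in V^h$,
\begin{equation*}
\|e\|^2_{L^2(\Omega)} = a(e,\phi) = a(e, \phi - \phi^*_h) \le \beta\, |e|_{H^1(\Omega)}\, |\phi - \phi^*_h|_{H^1(\Omega)}.
\end{equation*}
I now apply Lemma \ref{lem:uh*} with $p=1$ to $\phi$, producing $\phi^*_h \in V^h$ with $|\phi - \phi^*_h|_{H^1(\Omega)} \le Ch\bigl(|\phi|_{H^2(\Omega_0)} + |\phi|_{H^2(\Omega_1)}\bigr) \le Ch\|e\|_{L^2(\Omega)}$. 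Substituting the $H^1$-bound for $|e|_{H^1(\Omega)}$ already obtained and cancelling one power of $\|e\|_{L^2(\Omega)}$ produces the desired $O(h^{p+1})$ bound.

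The conceptually delicate step, which is precisely what Lemma \ref{lem:uh*} is designed to resolve, is to ensure that a piecewise-polynomial interpolant that is allowed to kink at the interface $\gamma$ (and therefore realises the correct approximation order across the jump) actually lies inside the SGFEM space $V^h$. That is the only place where the enrichment plays a role; the rest of the argument is the standard Céa/Aubin--Nitsche pipeline. A minor technical caveat is that for the duality step I need Lemma \ref{lem:uh*} at regularity $p+1=2$ on each $\Omega_j$, which is covered by the statement, together with piecewise $H^2$-regularity of $\phi$, which follows from the assumption $0<\tilde\kappa_0 \le \kappa \le \tilde\kappa_1$ with $\kappa$ continuous on each $\overline\Omega_j$.
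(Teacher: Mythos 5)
Your argument is correct, and for the $H^1$ semi-norm bound it coincides with the paper's proof: Galerkin orthogonality plus coercivity and boundedness from \eqref{eq:a} give C\'ea's lemma, and the interpolant $u^*_h$ from Lemma \ref{lem:uh*} supplies the $O(h^p)$ best-approximation bound. Where you genuinely diverge is the $L^2$ estimate. The paper disposes of it by "applying Lemma \ref{lem:uh*} with the (discrete) Poincar\'e--Friedrichs inequality," but Poincar\'e--Friedrichs applied to $u-u^h$ only converts the $H^1$ bound into $\|u-u^h\|_{L^2(\Omega)}\le C h^p$, one order short of the claimed $O(h^{p+1})$; C\'ea's lemma gives quasi-optimality in the energy norm, not in $L^2$, so the $L^2$ interpolation estimate of Lemma \ref{lem:uh*} cannot be transferred to $u^h$ directly. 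Your Aubin--Nitsche duality argument is the standard and correct way to recover the extra power of $h$, and it goes through here because the dual solution $\phi=T^*(e)$ is continuous across $\gamma$ and piecewise $H^2$, so Lemma \ref{lem:uh*} (used at regularity index $1$ inside the $p$-th order space $V^h$, which is legitimate since the interpolation rate saturates at the available regularity) yields $|\phi-\phi^*_h|_{H^1(\Omega)}\le Ch\|e\|_{L^2(\Omega)}$. In short, your proof is not only a valid alternative but actually closes a gap in the paper's own one-line treatment of the $L^2$ bound. The only caveat, which you partly flag yourself, is that the piecewise $H^2$ regularity of $\phi$ with $|\phi|_{H^2(\Omega_0)}+|\phi|_{H^2(\Omega_1)}\le C\|e\|_{L^2(\Omega)}$ requires slightly more than continuity of $\kappa_j$ on $\overline\Omega_j$ (e.g.\ $\kappa_j\in W^{1,\infty}(\Omega_j)$), a regularity assumption the paper itself uses implicitly in \eqref{eq:smT}.
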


\begin{proof}
Using the coercivity and boundedness of the bilinear form $a(\cdot, \cdot)$ in \eqref{eq:a} and the Galerkin orthogonality (from \eqref{eq:weakf} and \eqref{eq:vfhf}), the Cea's  Lemma yields
\begin{equation}
| u - u^h |_{H^1(\Omega)} \le \frac{\beta}{\alpha}  | u - v |_{H^1(\Omega)}, \quad \forall \ v\in V^h.
\end{equation}

Applying Lemma \ref{lem:uh*} with the (discrete) Poincar{\'e}--Friedrichs inequality, (see for example \cite{adams2003sobolev,suli2012lecture}) completes the proof.
\end{proof}

\subsection{Error estimation for the eigenvalue problem}

The goal of this section is to perform the error analysis of the discrete eigenvalue problem~\eqref{eq:vfh} by using the abstract theory outlined in Section~\ref{sec:theory} in the Hilbert space $L=L^2(\Omega)$. Let $T,T^* : L^2(\Omega) \rightarrow H^1_0(\Omega)\subset L^2(\Omega)$ be the exact solution and adjoint solution operators defined in Section~\ref{sec:T}. Let $T_h, T_h^*: L^2(\Omega) \rightarrow V^h \subset H^1_0(\Omega)\subset L^2(\Omega)$ be the SGFEM discrete solution and adjoint solution operators defined in \ref{sec:Th}.
In order to apply the abstract theory from Section \ref{sec:theory}, we first quantify the smoothness of the functions in the subspaces $G_\mu$ and 
$G_\mu^*$ defined in~\eqref{eq:def_Gmu} and assume that there is a constant $C$ so that 
\begin{equation} \label{eq:smT}
\begin{alignedat}{2}
\|\phi\|_{H^{p+1}(\Omega)} + \|T(\phi)\|_{H^{p+1}(\Omega)} & \le C \|\phi\|_L, &\qquad& \forall \phi \in G_\mu \subset H^{p+1}(\Omega), \\
\|\psi\|_{H^{p+1}(\Omega)} + \|T^*(\psi)\|_{H^{p+1}(\Omega)} & \le C \|\psi\|_L, &\qquad&  \forall \psi\in G_\mu^* \subset H^{p+1}(\Omega).
\end{alignedat}
\end{equation} 

Now we present the following error estimates.

\begin{theorem}[Error estimate on eigenvalues] \label{thm:ev}
Let $\mu \in \sigma(T)\setminus\{0\}$ with ascent $\varsigma$ and algebraic multiplicity $\varrho$ and $\mu_{h,j}$ be approximated eigenvalues, there holds 
\begin{equation} 
\max_{1 \le j \le\varrho } |\mu - \mu_{h,j}|^\varsigma  + |\mu - \langle \mu_h \rangle | \le C h^{2p},
\end{equation}
where $C$ is a constant, depending on $\mu$ (and on the mesh regularity, the polynomial degree $p$ and the domain $\Omega$) but independent of the mesh-size $h$.
\end{theorem}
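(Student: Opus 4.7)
The plan is to invoke the abstract spectral approximation result Theorem~\ref{thm:eve0} with $L=L^2(\Omega)$, the compact solution operator $T$ of Section~\ref{sec:T}, and the family $T_h$ of discrete solution operators of Section~\ref{sec:Th}. The theorem will give the desired bound once I control each of the three quantities appearing on the right-hand side of~\eqref{eq:cv_eigenval}, namely $\|(T-T_h)|_{G_\mu}\|_{\mathcal{L}(G_\mu;L)}$, $\|(T-T_h)^*|_{G_\mu^*}\|_{\mathcal{L}(G_\mu^*;L)}$, and the duality-type quantity $\sup_{\phi,\psi}|((T-T_h)\phi,\psi)_L|/(\|\phi\|_L\|\psi\|_L)$. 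Before applying the theorem I also need to check its hypothesis \eqref{eq:normc}, i.e.\ that $T_h\to T$ in $\mathcal{L}(L^2;L^2)$; this follows from Theorem~\ref{thm:ef} applied to a generic $f\in L^2(\Omega)$, since elliptic regularity for the interface source problem gives $Tf\in H^2(\Omega_0)\cap H^2(\Omega_1)$, so that $\|(T-T_h)f\|_{L^2(\Omega)}\le Ch^2\|f\|_{L^2(\Omega)}$ for $p\ge 1$.

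Next I would bound the two operator norm factors in the product term of~\eqref{eq:cv_eigenval}. For $\phi\in G_\mu$, the smoothness assumption~\eqref{eq:smT} gives $T\phi\in H^{p+1}(\Omega_0)\cap H^{p+1}(\Omega_1)$ with $|T\phi|_{H^{p+1}(\Omega_j)}\le C\|\phi\|_L$. Plugging this into the $L^2$ estimate of Theorem~\ref{thm:ef} yields
\begin{equation}
\|(T-T_h)\phi\|_{L^2(\Omega)} \le C h^{p+1}\bigl(|T\phi|_{H^{p+1}(\Omega_0)}+|T\phi|_{H^{p+1}(\Omega_1)}\bigr) \le C h^{p+1}\|\phi\|_L,
\end{equation}
so $\|(T-T_h)|_{G_\mu}\|_{\mathcal{L}(G_\mu;L)}\le C h^{p+1}$. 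The same argument applied to $T^*$ with $\psi\in G_\mu^*$ gives $\|(T-T_h)^*|_{G_\mu^*}\|_{\mathcal{L}(G_\mu^*;L)}\le C h^{p+1}$. The product of these two factors is therefore bounded by $C h^{2p+2}$, which is already better than $h^{2p}$.

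The step I expect to be the main obstacle is controlling the duality quantity, which requires a sharper Aubin--Nitsche type argument adapted to the SGFEM interface setting. For $\phi\in G_\mu$ and $\psi\in G_\mu^*$, I would first rewrite
\begin{equation}
((T-T_h)\phi,\psi)_L = b(T\phi-T_h\phi,\psi) = a(T\phi-T_h\phi,T^*\psi),
\end{equation}
using the definition~\eqref{eq:weakT*} of $T^*$. Because $T_h\phi\in V^h$ is the Galerkin projection associated with $a$, Galerkin orthogonality lets me subtract $T_h^*\psi\in V^h$ from the second argument, so
\begin{equation}
((T-T_h)\phi,\psi)_L = a\bigl(T\phi-T_h\phi,\,T^*\psi-T_h^*\psi\bigr).
\end{equation}
Continuity of $a$ from~\eqref{eq:a} followed by the $H^1$ estimate of Theorem~\ref{thm:ef} applied to both factors, combined with the regularity bounds~\eqref{eq:smT}, yields
\begin{equation}
|((T-T_h)\phi,\psi)_L| \le \beta\,|T\phi-T_h\phi|_{H^1(\Omega)}\,|T^*\psi-T_h^*\psi|_{H^1(\Omega)} \le C h^{2p}\|\phi\|_L\|\psi\|_L.
\end{equation}

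Substituting these three bounds into~\eqref{eq:cv_eigenval} gives $\max_j|\mu-\mu_{h,j}|^\varsigma+|\mu-\langle\mu_h\rangle|\le C(h^{2p}+h^{2p+2})\le Ch^{2p}$, which is the claim. The subtlety worth highlighting is that the Galerkin orthogonality underlying the duality argument relies on $T_h^*\psi$ lying in the \emph{same} SGFEM space $V^h$ used for $T_h\phi$ and on Theorem~\ref{thm:ef} being valid for the adjoint problem; both hold here because $a$ and $b$ are symmetric and $V^h$ contains the interface enrichment, so Lemma~\ref{lem:uh*} applies equally to $T^*\psi$.
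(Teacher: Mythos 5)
Your proposal is correct and follows essentially the same route as the paper: the $Ch^{p+1}$ bounds on $(T-T_h)|_{G_\mu}$ and its adjoint together with the Galerkin-orthogonality duality identity $((T-T_h)\phi,\psi)_L = a(T\phi-T_h\phi,\,T^*\psi-T_h^*\psi)$ yielding the $Ch^{2p}$ bound are exactly the paper's estimates \eqref{eq:e1}, \eqref{eq:e2}, and \eqref{eq:e3}, fed into Theorem~\ref{thm:eve0}. The only addition is your explicit verification of the norm-convergence hypothesis \eqref{eq:normc}, which the paper leaves implicit.
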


\begin{proof}
We estimate each term on the right-hand side of \eqref{eq:cv_eigenval}. Firstly, for $\phi \in G_\mu $ and $\psi \in L$, the Cauchy-Schwarz inequality gives
\begin{align}
((T-T_h)(\phi),\psi)_{L}  \le  \| T(\phi) - T_h(\phi) \|_{L} \| \psi \|_{L}, 
\end{align}
where the smoothness assumption \eqref{eq:smT} and Theorem \ref{thm:ef} yields the estimate
\begin{align} \label{eq:e1}
((T-T_h)(\phi),\psi)_{L}  \le C h^{p+1} \| \phi \|_L \| \psi \|_L.
\end{align}
Similarly, for $\phi \in L $ and $\psi\in G_\mu^*$, one obtains
\begin{align} \label{eq:e2}
(\phi, (T^* - T^*_h)(\psi))_{L}  \le C h^{p+1} \| \phi \|_L \| \psi \|_L.
\end{align}

Secondly, for $\phi \in G_\mu $ and $\psi\in G_\mu^*$, using definitions \eqref{eq:Th} and \eqref{eq:T*h}, Galerkin orthogonality, and boundedness \eqref{eq:a}, we calculate
\begin{align}
((T-T_h)(\phi),\psi)_{L}  
 & = b(T(\phi),\psi) - b(T_h(\phi),\psi) \nonumber\\
&= a(T(\phi), T^*(\psi) ) - a(T_h(\phi), T^*_h (\psi )) \nonumber\\
& = a(T(\phi) - T_h(\phi), T^*(\psi) - T^*_h (\psi ) ) \nonumber\\
& \le \beta | T(\phi) - T_h(\phi) |_{H^1(\Omega)} | T^*(\psi) - T^*_h (\psi ) |_{H^1(\Omega)}, \label{eq:calcul1}
\end{align}
from which the smoothness assumption \eqref{eq:smT} and Theorem \ref{thm:ef} yield the estimate
\begin{align} \label{eq:e3}
((T-T_h)(\phi),\psi)_{L}  \le C h^{2p} \| \phi \|_L \| \psi \|_L.
\end{align}

Now combining all the estimates \eqref{eq:e1}, \eqref{eq:e2}, and \eqref{eq:e3} with Theorem \ref{thm:eve0} completes the proof.
\end{proof}

\begin{remark}[Error estimate on eigenvalues]
Since the eigenvalues $\lambda$ and $\lambda_h$ associated with~\eqref{eq:vf} and~\eqref{eq:vfh}, respectively, are such that $\lambda=\mu^{-1}$ and $\lambda_h=\mu_h^{-1}$, we infer that the same estimate as in Theorem \ref{thm:ev} holds true for the error between $\lambda$ and $\lambda_h$.
\end{remark}

Now we present a preliminary result on the eigenfunction error estimate.
\begin{lemma}[Eigenfunction error estimate in $L^2$]  \label{lem:ef}
Let $\mu \in \sigma(T)\setminus\{0\}$ with ascent $\varsigma$ and algebraic multiplicity $\varrho$. Let $u_{h,j}\in V^h$ be a unit vector in $\text{\em ker}(\mu_{h,j} I- T_h)^\ell$ for some positive integer $\ell \le \varsigma$. Then, for any integer $\varrho$ with $\ell \le \varrho \le \varsigma$, there is a unit vector $u_\varrho \in \text{\em ker}(\mu I-T)^\varrho \subset G_\mu$ such that
\begin{equation} 
\| u_\varrho - u_{h,j} \|_L \le C h^{p \frac{m-\ell+1}{\varsigma}},
\end{equation}
where $C$ is a constant depending on $\mu$ but independent of $h$.
\end{lemma}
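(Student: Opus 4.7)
The plan is to apply the abstract Theorem~\ref{thm:efe0} with $T_n := T_h$, and then reduce the resulting operator-norm bound to the source-problem estimate of Theorem~\ref{thm:ef} via the smoothness assumption~\eqref{eq:smT}.

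\textbf{Step 1 (hypotheses of the abstract theorem).} First I would verify the hypotheses of Theorem~\ref{thm:efe0}. Both $T$ and $T_h$ act on $L^2(\Omega)$ with values in $H_0^1(\Omega)\subset L^2(\Omega)$, and $T$ is compact as noted after~\eqref{eq:weakTf}. For any $f\in L^2(\Omega)$, the solution $T(f)$ is (piecewise) $H^{p+1}$ on $\Omega_0$ and $\Omega_1$, so Theorem~\ref{thm:ef} gives $\|T(f)-T_h(f)\|_{L^2(\Omega)}\le C h^{p+1}\|f\|_{L^2(\Omega)}$; hence $T_h\to T$ in operator norm, which is precisely~\eqref{eq:normc}.

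\textbf{Step 2 (invoke Theorem~\ref{thm:efe0}).} Applied to the given unit vector $u_{h,j}\in\ker(\mu_{h,j}I-T_h)^\ell$, Theorem~\ref{thm:efe0} produces a vector $u_\varrho\in\ker(\mu I-T)^\varrho\subset G_\mu$ with
\begin{equation*}
\|u_\varrho - u_{h,j}\|_L \;\le\; C\,\bigl\|(T-T_h)|_{G_\mu}\bigr\|_{\mathcal{L}(G_\mu;L)}^{\frac{\varrho-\ell+1}{\varsigma}}.
\end{equation*}
If $u_\varrho$ is nonzero (which is forced for all sufficiently small $h$, since otherwise the inequality would contradict $\|u_{h,j}\|_L=1$), I would then normalize it without affecting the asymptotic rate.

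\textbf{Step 3 (bound the operator norm).} For any $\phi\in G_\mu$, Theorem~\ref{thm:ef} applied to the source problem with right-hand side $\phi$ yields
\begin{equation*}
\|(T-T_h)(\phi)\|_{L^2(\Omega)} \;\le\; C h^{p+1}\bigl(|T(\phi)|_{H^{p+1}(\Omega_0)}+|T(\phi)|_{H^{p+1}(\Omega_1)}\bigr),
\end{equation*}
which the smoothness assumption~\eqref{eq:smT} converts into $\|(T-T_h)(\phi)\|_L \le C h^{p+1}\|\phi\|_L$. Therefore $\|(T-T_h)|_{G_\mu}\|_{\mathcal{L}(G_\mu;L)}\le C h^{p+1}$, and inserting this into the bound of Step~2 produces the announced rate (with the proper reading that the exponent is $(p+1)(\varrho-\ell+1)/\varsigma$).

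\textbf{Main obstacle.} The delicate point is reconciling the smoothness assumption~\eqref{eq:smT}, written globally on $\Omega$, with the reality that eigenfunctions and source-problem solutions for an interface operator typically only lie in $H^{p+1}(\Omega_0)\cap H^{p+1}(\Omega_1)$ and not in $H^{p+1}(\Omega)$; the assumption must be interpreted in the broken Sobolev sense so that Theorem~\ref{thm:ef} (which is formulated in terms of the subdomain seminorms) can indeed be applied. Once this interpretation is fixed, the remaining steps are routine applications of the abstract spectral-approximation machinery already stated in Section~\ref{sec:theory}.
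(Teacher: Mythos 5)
Your proposal is correct and follows essentially the same route as the paper: the paper's proof simply combines its estimate \eqref{eq:e1} (which is exactly your Step 3, i.e., $\|(T-T_h)|_{G_\mu}\|_{\mathcal{L}(G_\mu;L)}\le Ch^{p+1}$ obtained from Theorem \ref{thm:ef} together with the smoothness assumption \eqref{eq:smT}) with the abstract eigenfunction bound of Theorem \ref{thm:efe0}. Your remark that this computation actually yields the exponent $(p+1)(\varrho-\ell+1)/\varsigma$ rather than the stated $p(m-\ell+1)/\varsigma$ is well taken; the paper's stated rate (with $m$ standing for $\varrho$) is a non-sharp consequence of the same argument.
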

\begin{proof}
Combining the estimate \eqref{eq:e1} with Theorem \ref{thm:efe0} completes the proof.
\end{proof}

\begin{theorem}[Eigenfunction error estimate in $H^1$] \label{thm:efh1}
Assume that $\varsigma=m=1$ for the exact eigenvalue $\mu$.
Since $\mu_h$ is simple, we drop the index $j$ for the 
approximate eigenfunction $u_h$. Assuming $u \in G_\mu \subset H^{p+1}(\Omega)$, there holds
\begin{equation} 
|u - u_h |_{H^1(\Omega)} \le Ch^p, 
\end{equation}
where $C$ is a constant depending on $\mu$ but independent of $h$.
\end{theorem}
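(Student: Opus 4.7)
The plan is to emulate the classical energy argument for Galerkin eigenvalue approximation, adapted to the SGFEM setting by using the enriched interpolant $u_h^{*}$ produced in Lemma~\ref{lem:uh*}. Because $\varsigma=m=1$, the eigenvalue $\mu$ is simple and we may normalize $\|u_h\|_{L^2(\Omega)}=1$; moreover the operator-norm bound $\|(T-T_h)|_{G_\mu}\|_{\mathcal{L}(G_\mu;L^2)}\le Ch^{p+1}$ that follows by duality from~\eqref{eq:e1}, combined with Theorem~\ref{thm:efe0}, delivers the auxiliary $L^2$ eigenfunction estimate $\|u-u_h\|_{L^2(\Omega)}\le Ch^{p+1}$ (this is the relevant specialization of Lemma~\ref{lem:ef}).

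First I would split $u-u_h=(u-u_h^{*})+(u_h^{*}-u_h)$, set $\eta:=u_h^{*}-u_h\in V^h$, and use coercivity of $a$ in~\eqref{eq:a} to write
\begin{equation*}
\alpha\,|u-u_h|_{H^1(\Omega)}^{2} \;\le\; a(u-u_h,\,u-u_h^{*}) \;+\; a(u-u_h,\,\eta).
\end{equation*}
Boundedness of $a$ and the bound $|u-u_h^{*}|_{H^1(\Omega)}\le Ch^p$ from Lemma~\ref{lem:uh*} control the first term by $\beta C h^p\,|u-u_h|_{H^1(\Omega)}$. For the second, since $\eta\in V^h$, the two eigen-equations \eqref{eq:vf} and \eqref{eq:vfh} give
\begin{equation*}
a(u-u_h,\eta) \;=\; \lambda\,b(u,\eta)-\lambda_h\,b(u_h,\eta) \;=\; \lambda\,b(u-u_h,\eta)+(\lambda-\lambda_h)\,b(u_h,\eta).
\end{equation*}
Cauchy--Schwarz in $L^2(\Omega)$, the triangle inequality $\|\eta\|_{L^2}\le\|u-u_h^{*}\|_{L^2}+\|u-u_h\|_{L^2}\le Ch^{p+1}$, the eigenvalue estimate $|\lambda-\lambda_h|\le Ch^{2p}$ from Theorem~\ref{thm:ev}, and the normalization $\|u_h\|_{L^2}=1$ together bound this second contribution by $Ch^{2p+2}$.

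Assembling produces $\alpha|u-u_h|_{H^1(\Omega)}^{2}\le \beta Ch^p|u-u_h|_{H^1(\Omega)}+Ch^{2p+2}$, and a single application of Young's inequality absorbs the linear-in-$|u-u_h|_{H^1}$ term on the right into the left, yielding the claimed rate $|u-u_h|_{H^1(\Omega)}\le Ch^p$. The main obstacle is that standard Galerkin orthogonality is not available for the eigenvalue problem, so the term $a(u-u_h,\eta)$ cannot simply be discarded; the decisive observation is that rewriting it through the eigen-equations converts it into $L^2$ inner products whose individual convergence (order $h^{p+1}$, or $h^{2p}$ for the eigenvalue defect) is strictly better than the target $H^1$ rate, so after Young's inequality the second-term contribution is negligible. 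The remaining work is bookkeeping on the interpolation and spectral-theory bounds already established.
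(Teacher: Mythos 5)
Your proof is correct, but it follows a different route from the paper's. The paper's proof is a two-line argument: with the normalization $b(u,u)=b(u_h,u_h)=1$ it invokes the Pythagorean eigenvalue error identity $a(u-u_h,u-u_h)=\lambda\|u-u_h\|_{L^2(\Omega)}^2+\lambda_h-\lambda$, then applies coercivity together with Theorem~\ref{thm:ev} (giving $|\lambda-\lambda_h|\le Ch^{2p}$) and Lemma~\ref{lem:ef} (giving $\|u-u_h\|_{L^2(\Omega)}\le Ch^{p}$ in the simple-eigenvalue case), so that $\alpha|u-u_h|^2_{H^1(\Omega)}\le Ch^{2p}$. You instead run a C\'ea-type energy argument: you insert the enriched interpolant $u_h^*$ from Lemma~\ref{lem:uh*}, bound the approximation term by $\beta Ch^p|u-u_h|_{H^1(\Omega)}$, and handle the lack of Galerkin orthogonality by rewriting $a(u-u_h,\eta)$ through the two eigen-equations — which is, in disguise, the same algebra that underlies the Pythagorean identity — and then absorb with Young's inequality. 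Both arguments are sound; the paper's is shorter because the identity packages the orthogonality defect into $\lambda_h-\lambda$ directly, whereas yours makes the mechanism explicit and only needs the sharper $L^2$ rate $\|u-u_h\|_{L^2(\Omega)}\le Ch^{p+1}$ (which you correctly extract from \eqref{eq:e1} and Theorem~\ref{thm:efe0}; note this is actually one order better than what the paper's Lemma~\ref{lem:ef} records) to make the defect term negligible. One cosmetic caveat common to both proofs: the sign/normalization of $u_h$ relative to the exact eigenfunction $u$ supplied by Theorem~\ref{thm:efe0} must be fixed consistently, which you do state and the paper leaves implicit.
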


\begin{proof}
The coercivity \eqref{eq:a} and the Pythagorean eigenvalue error identity (see \cite{strang1973analysis,hughes2014finite,puzyrev2017dispersion}) gives
\begin{equation}
\alpha |u - u_h |^2_{H^1(\Omega)} \le a(u - u_h, u - u_h) = \lambda \| u - u_h \|^2_{L^2(\Omega)} + \lambda_h - \lambda,
\end{equation}
which completes the proof by using Theorem \ref{thm:ev} and Lemma \ref{lem:ef}.
\end{proof}

\section{Numerical experiments} \label{sec:num}
In this section, we present the numerical validation of the standard FEM and SGFEM applied to both the source problem and the eigenvalue problem with an interface. If the interface is at one of the nodes in a mesh configuration, that is, a fitting mesh, then no enrichment is needed, hence one uses the standard FEM \cite{kergrene2016stable}. SGFEM is robust for non-fitting meshes. In the following numerical experiments, we utilize non-fitting meshes and discretize the domain uniformly. Let $\rho_p$ denote the convergence rate for $p$-th order approximation and we study the convergence behaviors as follows.

\subsection{Source problem}
We consider the source problem \eqref{eq:pdef} with the manufactured solution
\begin{equation} \label{eq:ex1u}
u(x) = 
\begin{cases}
\sin(6\pi x), & 0 \le x \le \gamma, \\
\frac{1}{2} \sin(3 \pi (x-1) ), & \gamma \le x \le 1,\\
\end{cases}
\end{equation}
where $\gamma = 1/3, \kappa_0(x) = 1, \kappa_1(x) = 4$ and $f(x)$ is the source function satisfying \eqref{eq:pdef}. This solution is $C^0$ but not $C^1$ at the interface $x=\gamma$. Thus, for non-fitting meshes, it is expected that the standard FEM solutions converge at a rate of $h^{1/2}$ in $H^1$ semi-norm for arbitrary order $p$.

We perform a uniform discretization of the domain into $N = 10,20,\cdots, 160$ elements. With these mesh configurations, the interface is alway located inside an element, hence they are non-fitting meshes.  Table \ref{tab:sp} shows the $H^1$ semi-norm errors of FEM and SGFEM solutions. For $p=1,2,3$, the $H^1$ semi-norm errors of the standard FEM solutions converge at an approximate rate of $h^{1/2}$ while those of SGFEM solutions converge optimally at rates of $h^p$, which validates Theorem \ref{thm:ef}.

\begin{table}[h!]
\centering 
\begin{tabular}{| c | c c | c  c| c  c| c |}
\hline
 & \multicolumn{2}{c|}{$p=1$} & \multicolumn{2}{c|}{$p=2$} & \multicolumn{2}{c|}{$p=3$}  \\
$N$ & FEM &  SGFEM & FEM &  SGFEM & FEM &  SGFEM  \\[0.1cm] \hline

10&	4.64E+0&	4.08E+0&	1.61E+0&	8.50E-1&	1.10E+0&	1.61E-1 \\[0.1cm] 
20&	2.67E+0&	2.09E+0&	1.02E+0&	2.29E-1&	8.35E-1&	2.03E-2\\[0.1cm] 
40&	1.57E+0&	1.06E+0&	7.41E-1&	6.22E-2&	5.15E-1&	2.67E-3\\[0.1cm] 
80&	1.05E+0&	5.31E-1&	4.59E-1&	1.56E-2&	4.12E-1&	3.39E-4\\[0.1cm] 
160&	6.40E-1&	2.66E-1&	3.72E-1&	3.97E-3&	2.57E-1&	4.92E-5\\[0.1cm] 
$\rho_p$	&	0.71&	0.99&	0.54&	1.94&	0.52&	2.93 \\[0.1cm] \hline

\end{tabular}
\caption{$H^1$ semi-norm errors of FEM and SGFEM solutions for $p=1,2,3$.}
\label{tab:sp} 
\end{table}

\subsection{Eigenvalue problems}
In this section, we consider the eigenvalue problem \eqref{eq:pde1d} with two cases where the exact eigenpairs are given in subsections \ref{sec:c2} and \ref{sec:c3}, that is, we consider the following two examples.

\begin{figure}[h!]
\centering
\includegraphics[height=6.55cm]{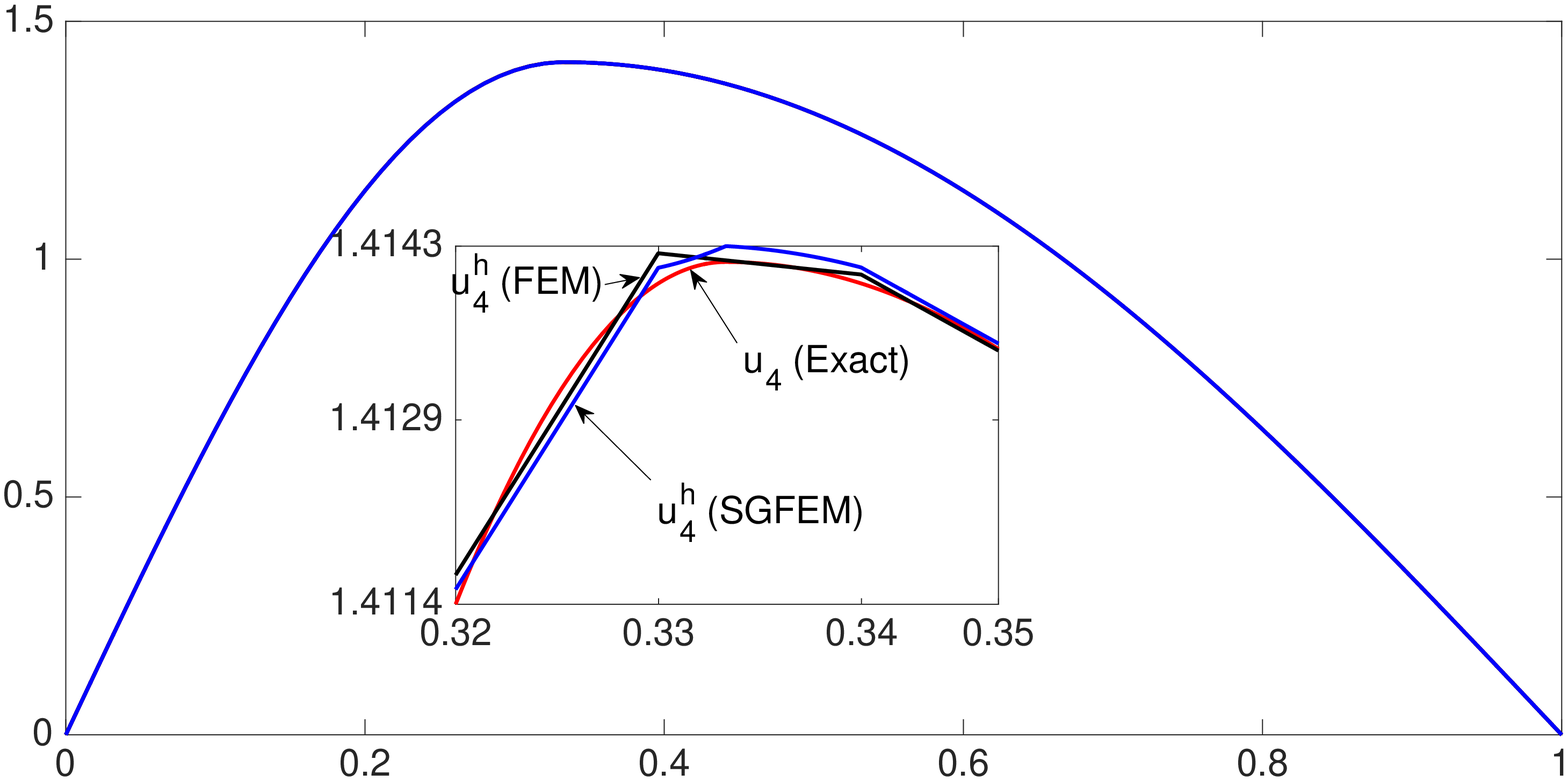} 
\caption{A comparison of eigenfunction $u_1$ approximated by FEM and SGFEM using $N=100$ linear elements for Example 1.}
\label{fig:efsex1u1}
\includegraphics[height=8.0cm]{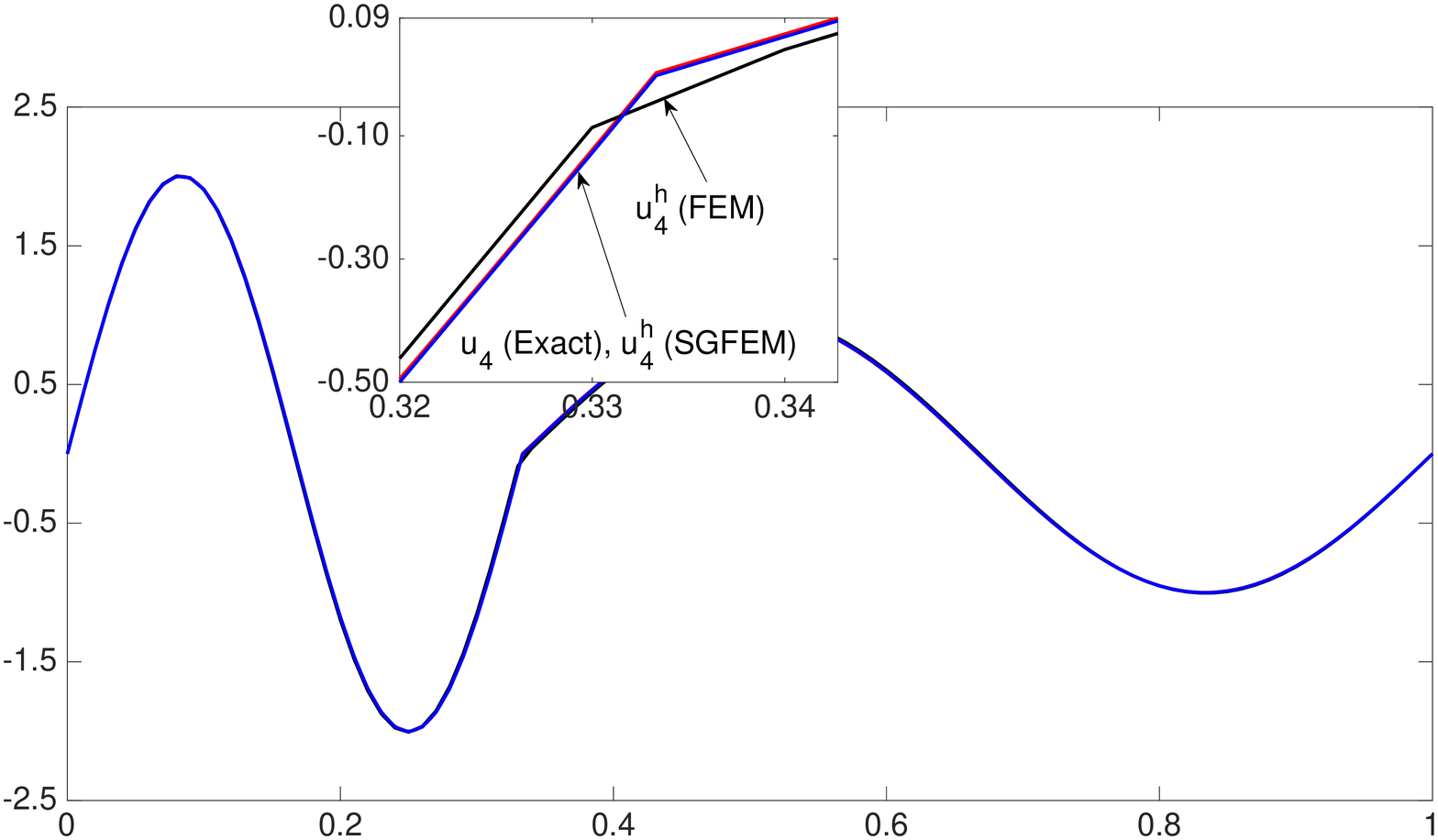} 
\caption{A comparison of eigenfunction $u_4$ approximated by FEM and SGFEM using $N=100$ linear elements for Example 1.}
\label{fig:efsex1u4}
\end{figure}

\begin{itemize}
\item \textbf{Example 1}: Eigenvalue problem \eqref{eq:pde1d} with $\gamma = 1/3$ and $\eta = 4$ as in subsection \ref{sec:c2}.
We choose this case because that (1) the exact eigenpairs are known, (2) the eigenfunction $u_j$ is $C^1$ continuous for $j=1,3,\cdots$ while only $C^0$ for $j=2,4,\cdots.$
\item \textbf{Example 2}: Eigenvalue problem \eqref{eq:pde1d} with $\gamma = 1/\pi$ and $\eta = e^2$ as in subsection \ref{sec:c3}. We choose this case as both $\gamma$ and $\eta$ are transcendental numbers and $x=\gamma$ is never a node on a uniform mesh. All the eigenfunctions are only $C^0$ at the interface.
\end{itemize}

We discretize the domain $\Omega = [0, 1]$ into $N=100$ uniform elements. Figure \ref{fig:efsex1u1} shows the comparison of the first eigenfunction $u_1$ for Example 1 when approximated by the standard FEM and SGFEM with linear elements, while Figure \ref{fig:efsex1u4} shows that of the fourth eigenfunction $u_4$. Since $u_1$ is $C^1$ at the interface, Figure \ref{fig:efsex1u1} shows that both standard FEM and SGFEM approximate $u_1$ equally well when using linear elements. SGFEM approximates $u_1$ slightly better when looking into the eigenfunction errors in both $H^1$ semi-norm (see Table \ref{tab:ex1efh1}) and $L^2$ norm (see Table \ref{tab:ex1efl2}). We will discuss this in more detail when we present the tables of errors. Since $u_4$ is only $C^0$ at the interface, Figure \ref{fig:efsex1u4} shows that SGFEM outperforms the standard FEM significantly.

Now we collect the relative eigenvalue errors, which is defined as
$
\frac{|\lambda^h - \lambda| }{\lambda},
$
where $\lambda$ is the exact eigenvalue. Table \ref{tab:ex1ev} shows the relative eigenvalue errors of $\lambda_1, \lambda_4$, and $\lambda_8$ when using both FEM and SGFEM with linear, quadratic, and cubic elements for Example 1, while Table \ref{tab:ex2ev} shows those of Example 2. Both tables show that SGFEM outperforms FEM consistently. When using FEM, the eigenvalue errors converge at an optimal rate only when using linear elements for $C^1$ eigenfunctions, that is $\lambda_1$ of Example 1 (see Table \ref{tab:ex1ev}). For all the scenarios, the eigenvalue errors when using SGFEM converge at optimal rates, that is $h^{2p}$. This validates the theoretical eigenvalue error estimate in Theorem \ref{thm:ev}.

\begin{table}[h!]
\centering 
\begin{tabular}{| c | c | c c | c  c| c  c| c |}
\hline
&  & \multicolumn{2}{c|}{$\lambda_1$} & \multicolumn{2}{c|}{$\lambda_4$} & \multicolumn{2}{c|}{$\lambda_8$}  \\
$p$ & $N$ & FEM &  SGFEM & FEM &  SGFEM & FEM &  SGFEM  \\[0.1cm] \hline

	&10&	8.66E-3&	7.61E-3&	2.78E-1&	2.22E-1&	1.11E+0&	7.63E-1 \\[0.1cm] 
	&20&	2.46E-3&	2.08E-3&	1.01E-1&	5.53E-2&	2.07E-1&	1.95E-1 \\[0.1cm] 
1	&40&	5.69E-4&	5.53E-4&	3.10E-2&	1.39E-2&	7.22E-2&	5.59E-2 \\[0.1cm] 
	&80&	1.47E-4&	1.41E-4&	1.60E-2&	3.47E-3&	2.62E-2&	1.39E-2 \\[0.1cm] 
	&160&	3.60E-5&	3.58E-5&	5.05E-3&	8.68E-4&	7.67E-3&	3.47E-3 \\[0.1cm] 
	&$\rho_1$	&	1.99&	1.94&	1.42&	2.00&	1.73&	1.94 \\[0.1cm]  \hline
	
	&10&	1.98E-4&	2.55E-5&	4.17E-2&	7.70E-3&	1.33E-1&	1.10E-1 \\[0.1cm] 
	&20&	2.70E-5&	1.60E-6&	1.43E-2&	5.98E-4&	2.69E-2&	8.85E-3 \\[0.1cm] 
2	&40&	2.81E-6&	1.00E-7&	9.66E-3&	4.42E-5&	1.00E-2&	6.84E-4 \\[0.1cm] 
	&80&	4.08E-7&	6.27E-9&	3.15E-3&	2.80E-6&	3.24E-3&	4.45E-5 \\[0.1cm] 
	&160&	4.27E-8&	3.93E-10&	2.42E-3&	1.82E-7&	2.41E-3&	2.90E-6 \\[0.1cm] 
	&$\rho_2$	&	3.04&	4.00&	1.04&	3.85&	1.46&	3.81 \\[0.1cm]  \hline

	&10&	4.09E-5&	2.96E-8&	1.82E-2&	2.76E-4&	2.93E-2&	9.20E-3 \\[0.1cm] 
	&20&	3.23E-6&	4.70E-10&	1.26E-2&	4.47E-6&	1.40E-2&	2.50E-4 \\[0.1cm] 
3	&40&	6.46E-7&	8.50E-12&	4.33E-3&	7.25E-8&	4.35E-3&	4.54E-6 \\[0.1cm] 
	&80&	4.99E-8&	4.13E-13&	3.02E-3&	1.14E-9&	3.05E-3&	7.24E-8 \\[0.1cm] 
	&$\rho_3$	&	3.13&	5.40&	0.93&	5.96&	1.15&	5.66 \\[0.1cm]  \hline

\end{tabular}
\caption{Relative eigenvalue errors for Example 1 using both FEM and SGFEM with $p=1,2,3$ for the first, fourth, and eighth eigenvalues.}
\label{tab:ex1ev} 
\end{table}

\begin{table}[h!]
\centering 
\begin{tabular}{| c | c | c c | c  c| c  c| c |}
\hline
&  & \multicolumn{2}{c|}{$\lambda_1$} & \multicolumn{2}{c|}{$\lambda_4$} & \multicolumn{2}{c|}{$\lambda_8$}  \\
$p$ & $N$ & FEM &  SGFEM & FEM &  SGFEM & FEM &  SGFEM  \\[0.1cm] \hline

	&10&	1.27E-2&	1.15E-2&	3.36E-1&	2.43E-1&	1.89E+0&	1.29E+0 \\[0.1cm] 
	&20&	4.15E-3&	3.03E-3&	1.21E-1&	5.91E-2&	2.44E-1&	1.70E-1 \\[0.1cm] 
1	&40&	1.79E-3&	7.86E-4&	5.41E-2&	1.45E-2&	5.87E-2&	3.96E-2 \\[0.1cm] 
	&80&	7.54E-4&	2.02E-4&	1.64E-2&	3.62E-3&	1.13E-2&	1.00E-2 \\[0.1cm] 
	&160&	2.78E-4&	5.08E-5&	5.86E-3&	9.04E-4&	2.85E-3&	2.54E-3 \\[0.1cm] 
	&$\rho_1$	&	1.35&	1.95&	1.46&	2.02&	2.32&	2.21 \\[0.1cm]  \hline
				
	&10&	1.38E-3&	4.66E-5&	5.98E-2&	1.32E-2&	1.71E-1&	1.18E-1 \\[0.1cm] 
	&20&	9.81E-4&	2.96E-6&	3.03E-2&	9.64E-4&	1.60E-2&	8.73E-3 \\[0.1cm] 
2	&40&	4.25E-4&	1.86E-7&	7.68E-3&	6.31E-5&	1.03E-3&	6.31E-4 \\[0.1cm] 
	&80&	2.93E-4&	1.17E-8&	6.01E-3&	4.26E-6&	2.85E-4&	4.22E-5 \\[0.1cm] 
	&160&	1.17E-4&	7.41E-10&	2.04E-3&	2.67E-7&	2.14E-5&	2.66E-6 \\[0.1cm] 
	&$\rho_2$	&	0.88&	3.97&	1.21&	3.90&	3.17&	3.86 \\[0.1cm]  \hline

	&10&	9.97E-4&	1.35E-7&	3.01E-2&	4.21E-4&	2.06E-2&	1.24E-2 \\[0.1cm] 
	&20&	5.51E-4&	2.12E-9&	1.19E-2&	7.55E-6&	1.36E-3&	2.62E-4 \\[0.1cm] 
3	&40&	3.60E-4&	3.55E-11&		7.16E-3&	1.22E-7&	2.70E-4&	4.92E-6 \\[0.1cm] 
	&80&	1.69E-4&	9.67E-14&	3.04E-3&	1.93E-9&	4.86E-5&	8.42E-8 \\[0.1cm] 
	&$\rho_3$	&	0.83&	6.71&	1.07&	5.92&	2.85&	5.73 \\[0.1cm]  \hline

\end{tabular}
\caption{Relative eigenvalue errors for Example 2 using both FEM and SGFEM with $p=1,2,3$ for the first, fourth, and eighth eigenvalues.}
\label{tab:ex2ev} 
\end{table}

Now we present the corresponding eigenfunction errors. We focus on Example 1 and collect both $H^1$ semi-norm and $L^2$ norm errors. Table \ref{tab:ex1efh1} shows the first, fourth, and eighth eigenfunction errors in $H^1$ semi-norm when using both FEM and SGFEM with linear, quadratic, and cubic elements, while Table \ref{tab:ex1efl2} shows the errors in $L^2$ norm. Similarly, both tables show that SGFEM leads to significantly smaller errors, hence outperforms FEM consistently. Since $u_1$ is $C^1$ at the interface, this $C^1$ regularity helps deliver optimal error convergence rates when using linear FEM. For all the scenarios, the eigenfunction errors in both $H^1$ semi-norm and $L^2$ norm when using SGFEM converge at optimal rates, that is $h^p$ and $h^{p+1}$, respectively. This validates the theoretical eigenfunction error estimate (in $H^1$ semi-norm) in Theorem \ref{thm:efh1}.

\begin{table}[h!]
\centering 
\begin{tabular}{| c | c | c c | c  c| c  c| c |}
\hline
&  & \multicolumn{2}{c|}{$u^h_1$} & \multicolumn{2}{c|}{$u^h_4$} & \multicolumn{2}{c|}{$u^h_8$}  \\
$p$ & $N$ & FEM &  SGFEM & FEM &  SGFEM & FEM &  SGFEM  \\[0.1cm] \hline

	&10&	3.81E-1&	3.59E-1&	9.70E+0&	8.90E+0&	4.66E+1&	4.64E+1 \\[0.1cm] 
	&20&	2.01E-1&	1.83E-1&	5.42E+0&	4.29E+0&	1.90E+1&	1.84E+1 \\[0.1cm] 
1	&40&	9.74E-2&	9.61E-2&	3.18E+0&	2.13E+0&	9.88E+0&	8.86E+0 \\[0.1cm] 
	&80&	4.94E-2&	4.82E-2&	2.13E+0&	1.06E+0&	5.66E+0&	4.30E+0 \\[0.1cm] 
	&160&	2.45E-2&	2.44E-2&	1.29E+0&	5.32E-1&	3.18E+0&	2.13E+0 \\[0.1cm] 
	&$\rho_1$	&	0.99&	0.97&	0.72&	1.01&	0.95&	1.10 \\[0.1cm]  \hline

	&10&	5.70E-2&	2.28E-2&	3.23E+0&	1.72E+0&	1.72E+1&	1.61E+1 \\[0.1cm] 
	&20&	1.77E-2&	5.71E-3&	2.07E+0&	4.58E-1&	6.32E+0&	3.69E+0 \\[0.1cm] 
2	&40&	6.68E-3&	1.43E-3&	1.50E+0&	1.24E-1&	3.22E+0&	9.85E-1 \\[0.1cm] 
	&80&	2.14E-3&	3.57E-4&	9.22E-1&	3.12E-2&	1.92E+0&	2.49E-1 \\[0.1cm] 
	&160&	8.19E-4&	8.93E-5&	7.46E-1&	7.94E-3&	1.51E+0&	6.35E-2 \\[0.1cm] 
	&$\rho_2$	&	1.53&	2.00&	0.54&	1.94&	0.87&	1.99 \\[0.1cm]  \hline

	&10&	1.99E-2&	8.02E-4&	2.23E+0&	3.19E-1&	6.52E+0&	3.99E+0 \\[0.1cm] 
	&20&	7.17E-3&	1.01E-4&	1.71E+0&	3.99E-2&	3.85E+0&	6.07E-1 \\[0.1cm] 
3	&40&	2.50E-3&	1.37E-5&	1.04E+0&	5.07E-3&	2.14E+0&	8.05E-2 \\[0.1cm] 
	&80&	8.89E-4&	1.71E-6&	8.28E-1&	6.34E-4&	1.69E+0&	1.01E-2 \\[0.1cm] 
	&160&	3.12E-4&	2.18E-7&	5.15E-1&	7.93E-5&	1.04E+0&	1.27E-3 \\[0.1cm] 
	&$\rho_3$	&	0.50&	2.96&	0.53&	2.99&	0.65&	2.91 \\[0.1cm]  \hline

\end{tabular}
\caption{Eigenfunction errors in $H^1$ semi-norm for Example 1 using both FEM and SGFEM with $p=1,2,3$ for the first, fourth, and eighth eigenfunctions.}
\label{tab:ex1efh1} 
\end{table}

\begin{table}[h!]
\centering 
\begin{tabular}{| c | c | c c | c  c| c  c| c |}
\hline
&  & \multicolumn{2}{c|}{$u^h_1$} & \multicolumn{2}{c|}{$u^h_4$} & \multicolumn{2}{c|}{$u^h_8$}  \\
$p$ & $N$ & FEM &  SGFEM & FEM &  SGFEM & FEM &  SGFEM  \\[0.1cm] \hline

	&10&	5.55E-3&	9.32E-3&	3.46E-1&	2.37E-1&	1.46E+0&	1.28E+0 \\[0.1cm] 
	&20&	1.79E-3&	2.27E-3&	1.29E-1&	5.78E-2&	4.26E-1&	3.67E-1 \\[0.1cm] 
1	&40&	3.52E-4&	6.02E-4&	3.62E-2&	1.39E-2&	1.35E-1&	9.86E-2 \\[0.1cm] 
	&80&	1.11E-4&	1.49E-4&	2.33E-2&	3.48E-3&	5.49E-2&	2.48E-2 \\[0.1cm] 
	&160&	2.21E-5&	3.77E-5&	7.74E-3&	8.69E-4&	1.67E-2&	6.20E-3 \\[0.1cm] 
	&$\rho_1$	&	2.00&	1.98&	1.34&	2.02&	1.59&	1.93 \\[0.1cm]  \hline

	&10&	1.47E-3&	3.52E-4&	6.52E-2&	2.98E-2&	4.38E-1&	3.79E-1 \\[0.1cm] 
	&20&	1.58E-4&	4.40E-5&	2.77E-2&	3.63E-3&	7.71E-2&	3.56E-2 \\[0.1cm] 
2	&40&	9.49E-5&	5.51E-6&	1.71E-2&	4.83E-4&	3.34E-2&	4.16E-3 \\[0.1cm] 
	&80&	4.86E-6&	6.89E-7&	5.92E-3&	6.02E-5&	1.19E-2&	4.92E-4 \\[0.1cm] 
	&160&	5.97E-6&	8.61E-8&	4.38E-3&	7.66E-6&	8.69E-3&	6.16E-5 \\[0.1cm] 
	&$\rho_2$	&	2.09&	3.00&	1.00&	2.98&	1.40&	3.15 \\[0.1cm]  \hline

	&10&	2.72E-4&	8.48E-6&	3.34E-2&	3.50E-3&	8.53E-2&	4.87E-2 \\[0.1cm] 
	&20&	8.94E-5&	5.31E-7&	2.35E-2&	2.13E-4&	5.07E-2&	3.40E-3 \\[0.1cm] 
3	&40&	1.44E-5&	3.61E-8&	7.93E-3&	1.34E-5&	1.58E-2&	2.14E-4 \\[0.1cm] 
	&80&	5.12E-6&	2.26E-9&	5.58E-3&	8.36E-7&	1.12E-2&	1.34E-5 \\[0.1cm] 
	&160&	8.50E-7&	1.44E-10&	1.98E-3&	5.23E-8&	3.92E-3&	8.36E-7 \\[0.1cm] 
	&$\rho_3$	&	2.08&	3.96&	1.02&	4.00&	1.11&	3.96 \\[0.1cm]  \hline

\end{tabular}
\caption{Eigenfunction errors in $L^2$ norm for Example 1 using both FEM and SGFEM with $p=1,2,3$ for the first, fourth, and eighth eigenfunctions.}
\label{tab:ex1efl2} 
\end{table}

\section{Concluding remarks} \label{sec:con}

In this paper, we focus on 1D and study the numerical approximation of eigenvalue problem with interfaces. We first generalize and develop arbitrary order SGFEMs for the interface source problem. Optimal error estimates in both $H^1$ semi-norm and $L^2$ norm are established. We then apply the abstract theory of the spectral approximation of compact operators to establish the eigenvalue and eigenfunction errors. 

Extension of this arbitrary order SGFEM to multiple dimensions is a challenging task. The key is to enrich the standard FEM space with enrichments which lead to a space containing a piecewise $p$-th order polynomial that interpolates the exact solution in an optimal fashion. In 1D, this is to show the existence of such enrichment satisfying \eqref{eq:ie} in Lemma \ref{lem:uh*}.  In multiple dimensions, to develop high order SGFEMs, it is challenging to develop the enrichments such that (1) satisfy this existence condition (2) the conditioning number of the resulting stiffness matrix is not significantly larger than that of standard FEM. Another direction of future work is the development of isogeometric elements for the interface problems. In general, for eigenvalue problem without an interface, isogeometric elements improve the spectral approximation significantly due to the high continuities of the isogeometric basis functions. However, these high continuities pose new challenges to develop enrichments for the problems with interfaces.

\section*{Acknowledgement}
This publication was made possible in part by the CSIRO Professorial Chair in Computational Geoscience at Curtin University and the Deep Earth Imaging Enterprise Future Science Platforms of the Commonwealth Scientific Industrial Research Organisation, CSIRO, of Australia. Additional support was provided by the European Union's Horizon 2020 Research and Innovation Program of the Marie Sk{\l}odowska-Curie grant agreement No. 777778, the Mega-grant of the Russian Federation Government (N 14.Y26.31.0013), the Institute for Geoscience Research (TIGeR), and the Curtin Institute for Computation. The J. Tinsley Oden Faculty Fellowship Research Program at the Institute for Computational Engineering and Sciences (ICES) of the University of Texas at Austin has partially supported the visits of VMC to ICES. The authors thank Professor I. Babu\v{s}ka (University of Texas at Austin) for stimulating and insightful discussions.



\bibliographystyle{elsarticle-harv}\biboptions{square,sort,comma,numbers}
\bibliography{ref}

\begin{thebibliography}{47}
\expandafter\ifx\csname natexlab\endcsname\relax\def\natexlab#1{#1}\fi
\expandafter\ifx\csname url\endcsname\relax
  \def\url#1{\texttt{#1}}\fi
\expandafter\ifx\csname urlprefix\endcsname\relax\def\urlprefix{URL }\fi

\bibitem[{Abdelaziz and Hamouine(2008)}]{abdelaziz2008survey}
Abdelaziz, Y., Hamouine, A., 2008. A survey of the extended finite element.
  Computers \& structures 86~(11-12), 1141--1151.

\bibitem[{Adams and Fournier(2003)}]{adams2003sobolev}
Adams, R.~A., Fournier, J.~J., 2003. Sobolev spaces. Vol. 140. Academic press.

\bibitem[{Ainsworth and Wajid(2010)}]{ainsworth2010optimally}
Ainsworth, M., Wajid, H.~A., 2010. Optimally blended spectral-finite element
  scheme for wave propagation and nonstandard reduced integration. SIAM Journal
  on Numerical Analysis 48~(1), 346--371.

\bibitem[{Antonietti et~al.(2006)Antonietti, Buffa, and
  Perugia}]{antonietti2006discontinuous}
Antonietti, P.~F., Buffa, A., Perugia, I., 2006. Discontinuous {G}alerkin
  approximation of the {L}aplace eigenproblem. Comput. Methods Appl. Mech.
  Engrg. 195~(25), 3483--3503.

\bibitem[{Babu{\v{s}}ka and Banerjee(2012)}]{babuvska2012stable}
Babu{\v{s}}ka, I., Banerjee, U., 2012. Stable generalized finite element method
  ({SGFEM}). Computer Methods in Applied Mechanics and Engineering 201,
  91--111.

\bibitem[{Babu{\v{s}}ka et~al.(2017)Babu{\v{s}}ka, Banerjee, and
  Kergrene}]{babuvska2017strongly}
Babu{\v{s}}ka, I., Banerjee, U., Kergrene, K., 2017. Strongly stable
  generalized finite element method: Application to interface problems.
  Computer Methods in Applied Mechanics and Engineering 327, 58--92.

\bibitem[{Babuska and Melenk(1995)}]{babuska1995partition}
Babuska, I., Melenk, J.~M., 1995. The partition of unity finite element method.
  Tech. rep., Physical science and technology, Maryland University.

\bibitem[{Babu{\v{s}}ka and Osborn(1991)}]{babuvska1991eigenvalue}
Babu{\v{s}}ka, I., Osborn, J., 1991. Eigenvalue problems. In: Handbook of
  Numerical Analysis, {V}ol.\ {II}. Handb. Numer. Anal., II. North-Holland,
  Amsterdam, pp. 641--787.

\bibitem[{Barto{\v{n}} et~al.(2018)Barto{\v{n}}, Calo, Deng, and
  Puzyrev}]{bartovn2018generalization}
Barto{\v{n}}, M., Calo, V., Deng, Q., Puzyrev, V., 2018. Generalization of the
  {P}ythagorean eigenvalue error theorem and its application to isogeometric
  analysis. In: Numerical Methods for PDEs. Springer, pp. 147--170.

\bibitem[{Belytschko et~al.(2009)Belytschko, Gracie, and
  Ventura}]{belytschko2009review}
Belytschko, T., Gracie, R., Ventura, G., 2009. A review of extended/generalized
  finite element methods for material modeling. Modelling and Simulation in
  Materials Science and Engineering 17~(4), 043001.

\bibitem[{Bramble and Osborn(1973)}]{bramble1973rate}
Bramble, J.~H., Osborn, J.~E., 1973. Rate of convergence estimates for
  nonselfadjoint eigenvalue approximations. Math. Comp. 27~(123), 525--549.

\bibitem[{Brenner and Scott(2008)}]{brenner2008mathematical}
Brenner, S.~C., Scott, L.~R., 2008. The mathematical theory of finite element
  methods, 3rd Edition. Vol.~15 of Texts in Applied Mathematics. Springer, New
  York.
\newline\urlprefix\url{http://dx.doi.org/10.1007/978-0-387-75934-0}

\bibitem[{Calo et~al.(2017{\natexlab{a}})Calo, Deng, and
  Puzyrev}]{calo2017quadrature}
Calo, V., Deng, Q., Puzyrev, V., 2017{\natexlab{a}}. Quadrature blending for
  isogeometric analysis. Procedia Computer Science 108, 798--807.

\bibitem[{Calo et~al.(To appear)Calo, Cicuttin, Deng, and
  Ern}]{calo2017spectral}
Calo, V.~M., Cicuttin, M., Deng, Q., Ern, A., To appear. Spectral approximation
  of elliptic operators by the {H}ybrid {H}igh--{O}rder method. Mathematics of
  Computation.

\bibitem[{Calo et~al.(2017{\natexlab{b}})Calo, Deng, and
  Puzyrev}]{calo2017dispersion}
Calo, V.~M., Deng, Q., Puzyrev, V., 2017{\natexlab{b}}. Dispersion optimized
  quadratures for isogeometric analysis. arXiv preprint arXiv:1702.04540.

\bibitem[{Canuto(1978)}]{canuto1978eigenvalue}
Canuto, C., 1978. Eigenvalue approximations by mixed methods. RAIRO Anal.
  Num{\'e}r. 12~(1), 27--50.

\bibitem[{Chatelin(1975)}]{chatelin1983spectral}
Chatelin, F., 1975. Spectral approximation of linear operators. 1983.

\bibitem[{Ciarlet(2002)}]{ciarlet2002finite}
Ciarlet, P.~G., 2002. Finite Element Method for Elliptic Problems. Society for
  Industrial and Applied Mathematics, Philadelphia, PA, USA.

\bibitem[{Cottrell et~al.(2006)Cottrell, Reali, Bazilevs, and
  Hughes}]{cottrell2006isogeometric}
Cottrell, J.~A., Reali, A., Bazilevs, Y., Hughes, T. J.~R., 2006. Isogeometric
  analysis of structural vibrations. Computer methods in applied mechanics and
  engineering 195~(41), 5257--5296.

\bibitem[{Deng et~al.(2018{\natexlab{a}})Deng, Barto{\v{n}}, Puzyrev, and
  Calo}]{deng2018dispersion}
Deng, Q., Barto{\v{n}}, M., Puzyrev, V., Calo, V.~M., 2018{\natexlab{a}}.
  Dispersion-minimizing quadrature rules for {C}1 quadratic isogeometric
  analysis. Computer Methods in Applied Mechanics and Engineering 328,
  554--564.

\bibitem[{Deng and Calo(2018)}]{deng2017dispersion}
Deng, Q., Calo, V., 2018. Dispersion-minimized mass for isogeometric analysis.
  Computer Methods in Applied Mechanics and Engineering 341, 71--92.

\bibitem[{Deng et~al.(2018{\natexlab{b}})Deng, Puzyrev, and
  Calo}]{deng2018isogeometric}
Deng, Q., Puzyrev, V., Calo, V., 2018{\natexlab{b}}. Isogeometric spectral
  approximation for elliptic differential operators. Journal of Computational
  Science.

\bibitem[{Deng et~al.(2019)Deng, Puzyrev, and Calo}]{deng2019optimal}
Deng, Q., Puzyrev, V., Calo, V., 2019. Optimal spectral approximation of
  2n-order differential operators by mixed isogeometric analysis. Computer
  Methods in Applied Mechanics and Engineering 343, 297--313.

\bibitem[{Ern and Guermond(2004)}]{ern2004theory}
Ern, A., Guermond, J.-L., 2004. Theory and practice of finite elements. Vol.
  159 of Applied Mathematical Sciences. Springer-Verlag, New York.

\bibitem[{Fries and Belytschko(2010)}]{fries2010extended}
Fries, T.-P., Belytschko, T., 2010. The extended/generalized finite element
  method: an overview of the method and its applications. International Journal
  for Numerical Methods in Engineering 84~(3), 253--304.

\bibitem[{Giani(2015)}]{giani2015hp}
Giani, S., 2015. hp-adaptive composite discontinuous {G}alerkin methods for
  elliptic eigenvalue problems on complicated domains. Appl. Math. Comput. 267,
  604--617.

\bibitem[{Gopalakrishnan et~al.(2015)Gopalakrishnan, Li, Nguyen, and
  Peraire}]{gopalakrishnan2015spectral}
Gopalakrishnan, J., Li, F., Nguyen, N.-C., Peraire, J., 2015. Spectral
  approximations by the {HDG} method. Math. Comp. 84~(293), 1037--1059.

\bibitem[{Grisvard(1985)}]{grisvard1985elliptic}
Grisvard, P., 1985. Elliptic problems in nonsmooth domains. Vol.~24 of
  Monographs and Studies in Mathematics. Pitman (Advanced Publishing Program),
  Boston, MA.

\bibitem[{Hughes et~al.(2014)Hughes, Evans, and Reali}]{hughes2014finite}
Hughes, T. J.~R., Evans, J.~A., Reali, A., 2014. Finite element and {NURBS}
  approximations of eigenvalue, boundary-value, and initial-value problems.
  Computer Methods in Applied Mechanics and Engineering 272, 290--320.

\bibitem[{Hughes et~al.(2008)Hughes, Reali, and Sangalli}]{hughes2008duality}
Hughes, T. J.~R., Reali, A., Sangalli, G., 2008. Duality and unified analysis
  of discrete approximations in structural dynamics and wave propagation:
  comparison of p-method finite elements with k-method {NURBS}. Computer
  methods in applied mechanics and engineering 197~(49), 4104--4124.

\bibitem[{Johnson(2009)}]{johnson2009numerical}
Johnson, C., 2009. Numerical solution of partial differential equations by the
  finite element method. Dover Publications, Inc., Mineola, NY, reprint of the
  1987 edition.

\bibitem[{Kergrene et~al.(2016)Kergrene, Babu{\v{s}}ka, and
  Banerjee}]{kergrene2016stable}
Kergrene, K., Babu{\v{s}}ka, I., Banerjee, U., 2016. Stable generalized finite
  element method and associated iterative schemes; application to interface
  problems. Computer Methods in Applied Mechanics and Engineering 305, 1--36.

\bibitem[{Laborde et~al.(2005)Laborde, Pommier, Renard, and
  Sala{\"u}n}]{laborde2005high}
Laborde, P., Pommier, J., Renard, Y., Sala{\"u}n, M., 2005. High-order extended
  finite element method for cracked domains. International Journal for
  Numerical Methods in Engineering 64~(3), 354--381.

\bibitem[{Melenk(1995)}]{melenk1995generalized}
Melenk, J.~M., 1995. On generalized finite element methods. Ph.D. thesis,
  research directed by Dept. of Mathematics.University of Maryland at College
  Park.

\bibitem[{Melenk and Babu{\v{s}}ka(1996)}]{melenk1996partition}
Melenk, J.~M., Babu{\v{s}}ka, I., 1996. The partition of unity finite element
  method: basic theory and applications. Computer methods in applied mechanics
  and engineering 139~(1-4), 289--314.

\bibitem[{Mercier et~al.(1981)Mercier, Osborn, Rappaz, and
  Raviart}]{mercier1981eigenvalue}
Mercier, B., Osborn, J.~E., Rappaz, J., Raviart, P.-A., 1981. Eigenvalue
  approximation by mixed and hybrid methods. Math. Comp. 36~(154), 427--453.

\bibitem[{Mercier and Rappaz(1978)}]{mercier1978eigenvalue}
Mercier, B., Rappaz, J., 1978. Eigenvalue approximation via non-conforming and
  hybrid finite element methods. Publications des s{\'e}minaires de
  math{\'e}matiques et informatique de Rennes 1978~(S4), 1--16.

\bibitem[{Osborn(1975)}]{osborn1975spectral}
Osborn, J.~E., 1975. Spectral approximation for compact operators. Math. Comp.
  29~(131), 712--725.

\bibitem[{Puzyrev et~al.(2018)Puzyrev, Deng, and Calo}]{puzyrev2018spectral}
Puzyrev, V., Deng, Q., Calo, V., 2018. Spectral approximation properties of
  isogeometric analysis with variable continuity. Computer Methods in Applied
  Mechanics and Engineering 334, 22--39.

\bibitem[{Puzyrev et~al.(2017)Puzyrev, Deng, and Calo}]{puzyrev2017dispersion}
Puzyrev, V., Deng, Q., Calo, V.~M., 2017. Dispersion-optimized quadrature rules
  for isogeometric analysis: modified inner products, their dispersion
  properties, and optimally blended schemes. Computer Methods in Applied
  Mechanics and Engineering 320, 421--443.

\bibitem[{Stazi et~al.(2003)Stazi, Budyn, Chessa, and
  Belytschko}]{stazi2003extended}
Stazi, F., Budyn, E., Chessa, J., Belytschko, T., 2003. An extended finite
  element method with higher-order elements for curved cracks. Computational
  Mechanics 31~(1-2), 38--48.

\bibitem[{Strang and Fix(1973)}]{strang1973analysis}
Strang, G., Fix, G.~J., 1973. An analysis of the finite element method. Vol.
  212. Prentice-hall Englewood Cliffs, NJ.

\bibitem[{Strouboulis et~al.(2000{\natexlab{a}})Strouboulis, Babu{\v{s}}ka, and
  Copps}]{strouboulis2000design}
Strouboulis, T., Babu{\v{s}}ka, I., Copps, K., 2000{\natexlab{a}}. The design
  and analysis of the generalized finite element method. Computer methods in
  applied mechanics and engineering 181~(1-3), 43--69.

\bibitem[{Strouboulis et~al.(2000{\natexlab{b}})Strouboulis, Copps, and
  Babu{\v{s}}ka}]{strouboulis2000generalized}
Strouboulis, T., Copps, K., Babu{\v{s}}ka, I., 2000{\natexlab{b}}. The
  generalized finite element method: an example of its implementation and
  illustration of its performance. International Journal for Numerical Methods
  in Engineering 47~(8), 1401--1417.

\bibitem[{Strouboulis et~al.(2001)Strouboulis, Copps, and
  Babu{\v{s}}ka}]{strouboulis2001generalized}
Strouboulis, T., Copps, K., Babu{\v{s}}ka, I., 2001. The generalized finite
  element method. Computer methods in applied mechanics and engineering
  190~(32-33), 4081--4193.

\bibitem[{S{\"u}li(2012)}]{suli2012lecture}
S{\"u}li, E., 2012. Lecture notes on finite element methods for partial
  differential equations. Mathematical Institute, University of Oxford.

\bibitem[{Zhang et~al.(2014)Zhang, Banerjee, and
  Babu{\v{s}}ka}]{zhang2014higher}
Zhang, Q., Banerjee, U., Babu{\v{s}}ka, I., 2014. Higher order stable
  generalized finite element method. Numerische Mathematik 128~(1), 1--29.

\end{thebibliography}

\end{document}